\newtheorem{theorem}{Theorem}[section]
\newtheorem{corollary}[theorem]{Corollary}
\newtheorem{lemma}[theorem]{Lemma}
\newtheorem{proposition}[theorem]{Proposition}
\newtheorem{remark}[theorem]{Remark}
\theoremstyle{definition}
\newtheorem{definition}[theorem]{Definition}
\newtheorem{example}[theorem]{Example}
\DeclareMathOperator{\Comp}{Comp}
\DeclareMathOperator{\Des}{Des}
\DeclareMathOperator{\type}{type}
\numberwithin{equation}{section}
\begin{document}
\title[colored eulerian polynomials and permutohedra]{colored eulerian polynomials and the Colored permutohedron }
\author{Dustin Hedmark}
\date{\today}
\address{Department of Mathematics\\University of Kentucky}
\email{dustin.hedmark@uky.edu}
\begin{abstract}
This paper introduces a colored generalization of the Eulerian polynomials, denoted the $\alpha$-colored Eulerian polynomials. We first compute these polynomials by taking the $h$-vector of the $\alpha$-colored permutohedron, a colored analog of the permutohedron which we develop. We also arrive at the $\alpha$-colored Eulerian polynomials combinatorially by defining a correct notion of descent for colored permutations.
\end{abstract}
\maketitle
\section{Introduction}
The Eulerian polynomials have a long and rich history in combinatorics. Euler first defined the Eulerian polynomials as the numerator for the generating function of the $n$'th powers, that is, the degree $n$ polynomial satisfying $A_n(x)=(1-x)^{n+1}\sum_{k\geq 0}k^nx^k$. Equivalently, the Eulerian polynomials can be defined as a sum over descents in the symmetric group $\mathfrak{S}_n$, namely as $A_n(x)=\sum_{\pi\in\mathfrak{S}_n}x^{1+d(\pi)}$, for $d(\pi)$ the number of descents of $\pi$. Yet another way to arrive at the Eulerian polynomials is as the $h$-polynomial of the permutohedron $P_n$.

In this paper we define the $\alpha$-colored Eulerian polynomials for $\alpha$ a positive integer. We develop an $\alpha$-colored analog for the descent and permutohedron constructions of the usual Eulerian polynomials mentioned above, but we arrive at our polynomials in a different order than was done classically. Namely, we first construct the $\alpha$-colored Eulerian polynomials in Section~\ref{section_polynomial} as the $h$-polynomials of the $\alpha$-colored permutohedron, $P_n^\alpha$, which we develop in Section~\ref{alpha_complex_section}. In Section~\ref{section_colored_permutations} we develop the combinatorial theory of $\alpha$-colored permutations and express the $\alpha$-colored Eulerian polynomials in terms of descents. Lastly, we give a recurrence relation for the $\alpha$-Eulerian numbers which is derived using the combinatorial theory developed in Section~\ref{section_colored_permutations}.

As a generalization of the usual Eulerian polynomials, the $\alpha$-colored Eulerian polynomials are real rooted, and hence log-concave, as will be shown in Section~\ref{section_polynomial}. It is worth mentioning now that any results about $\alpha$-colored constructions recover classical results by taking $\alpha=1$.
\section{preliminaries}
We begin with a discussion of the lattice of compositions of $n$, denoted $\Comp(n)$, as well as a discussion of the ordered partition lattice, $Q_n$. Throughout the paper, we denote the $n$-set by $[n]=\{1,2,\dots,n\}$.
\begin{definition}[$\Comp{(n)}$]
Let $\Comp(n)$ denote the poset of ordered integer partitions of $n$ into non-negative parts, with cover relation given by adding adjacent parts. The minimum and maximum elements of $\Comp{(n)}$ are $\hat{0}=(1,1,\dots,1)$ and $\hat{1}=(n)$ respectively. For a composition $\vec{c}=(c_1,c_2,\dots,c_k)$ we refer to $c_i$ as the $i$'th part of $\vec{c}$.
\end{definition}
Let $Q_n$ denote the poset of ordered set partitions on $n$ objects. In contrast to the usual partition lattice $\Pi_n$, order among the blocks in $Q_n$ matters. The cover relation in $Q_n$ is given by the merging of adjacent blocks.

The \emph{type} of an ordered set partition $\tau=(B_1,B_2,\dots,B_k)$ in $Q_n$ is defined to be the composition of $n$ given by the cardinality of the blocks of $\tau$ in order, or $\type(\tau)=(|B_1|,|B_2|,\dots,|B_k|)$ in $\Comp(n)$.
\begin{definition}[$\alpha$-colored partition lattice] 
\label{Q_n_alpha}
Let $Q_n^\alpha$ be the collection of ordered set partitions where each block has one of $\alpha$ colors, with the last block a fixed color. The cover relationship in $Q_n^\alpha$ is given by the merging of adjacent blocks of the same color.
\end{definition}
 There is a clear bijection between elements of $Q_n^\alpha$ and ordered set partitions where we color the breaks between the blocks of the partition, namely by coloring bars between blocks the color of the block to its left and forgetting the color of the last block. This bijection only holds on the level of elements--we are not asserting a poset structure on the set of ordered set partitions with colored bars.
%In Theorem~\ref{theorem_main} we count elements of $Q_n^\alpha$ with the colored bar interpretation, thus an example of how we interpret elements of $Q_n^\alpha$ as having colored bars instead of colored blocks is given now.

\begin{example}
~\label{red_green}
Let $n=5$ and $\alpha=2$. Instead of having two colors, we will let our blocks be hatted or bald, and force our last block to be hatted. Four distinct elements of $Q_5^2$ are $\hat{1}\hat{2}\hat{3}|\hat{4}\hat{5}$, $123|\hat{4}\hat{5}$, $\hat{4}\hat{5}|\hat{1}\hat{2}\hat{3}$ and $45|\hat{1}\hat{2}\hat{3}$.

Alternatively, the elements of $Q_n^\alpha$ can be thought of as having colored bars $|$ and $\hat{|}$, yielding the respective elements
 $123\hat{|}45, 123|45, 45\hat{|}123,$ and $45|123$.
\end{example}
While $Q_n^\alpha$ is primarily introduced as a means to develop the $\alpha$-colored permutohedron $P_n^\alpha$, we remark that just as ordered set partitions are an important tool in the computation of the composition of ordinary generating functions, the poset $Q_n^\alpha$ is an indexing poset for the $n$-fold composition of ordinary generating functions. This suggests another interpretation of $Q_n^\alpha$.

When $\alpha=1$, $Q_n^1$ is the usual ordered partition lattice, which we think of as lists of sets. When $\alpha=2$, with ``colors" $|$ and $\hat{|}$ as in Example~\ref{red_green}, then $Q_n^2$ can be thought of as lists of lists of sets, where $\hat{|}$ denotes a comma in an outer list and $|$ denotes a comma in an inner list. Continuing in this fashion, we can think of $Q_n^3$ as lists of lists of lists of sets, and so on. The use of the terminology lists of lists of sets comes from Motzkin's paper~\cite{Motzkin}. This correspondence is demonstrated in the following example.
\begin{example}
Let $n=5$ and $\alpha=2$, with bars $|$ and $\hat{|}$. Then:
\begin{align*}
1\hat{|}23|45&\longleftrightarrow \{1,\{23,45\}\}\\ 45|1\hat{|}2{\color{green}}3&\longleftrightarrow\{\{45,1\},23\}\\
45|1|2{\color{red}}3&\longleftrightarrow\{\{45,1,23\}\}\\
\end{align*}
\end{example}

\begin{proposition}
The exponential generating function for the $\alpha$-colored ordered partition lattice is given by 
$$\sum_{n\geq 1}\frac{|Q_n^\alpha|}{n!}x^n=\frac{e^x-1}{1-\alpha(e^x-1)}.$$
\end{proposition}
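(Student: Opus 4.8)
The plan is to count $Q_n^\alpha$ by the number of blocks and recognize the resulting series as a geometric series. First I would observe that an element of $Q_n^\alpha$ with exactly $k$ blocks is the data of an ordered set partition $(B_1,\dots,B_k)$ of $[n]$ into $k$ nonempty blocks, together with a choice of one of $\alpha$ colors for each of the first $k-1$ blocks (the last block has a fixed color, contributing a factor of $1$). Ordered set partitions of $[n]$ into $k$ nonempty blocks are in bijection with surjections $[n]\twoheadrightarrow[k]$, whose exponential generating function in the variable $x$ is $(e^x-1)^k$; indeed, this is the $k$-fold product of the EGF $e^x-1$ of a single nonempty set, by the product rule for exponential generating functions. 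Summing over $k$ then gives
\[
\sum_{n\geq 1}\frac{|Q_n^\alpha|}{n!}x^n \;=\; \sum_{k\geq 1}\alpha^{k-1}(e^x-1)^k .
\]

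Next I would evaluate the right-hand side as a formal power series in $\mathbb{Q}[[x]]$. Setting $u=\alpha(e^x-1)$, which has zero constant term, the sum is $\alpha^{-1}\sum_{k\geq1}u^k=\alpha^{-1}\cdot\frac{u}{1-u}$, and substituting back for $u$ yields exactly $\dfrac{e^x-1}{1-\alpha(e^x-1)}$, as claimed.

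An equivalent and perhaps more structural route is to set $F(x)=\sum_{n\geq1}\frac{|Q_n^\alpha|}{n!}x^n$ and to split an element of $Q_n^\alpha$ according to whether it has a single block or not: a single block carries the fixed color (EGF $e^x-1$), while otherwise the first block is a nonempty labeled set with one of $\alpha$ color choices (EGF $\alpha(e^x-1)$) followed by an arbitrary element of $Q^\alpha$ on the remaining labels (EGF $F$). The product rule for EGFs then gives the functional equation $F=(e^x-1)+\alpha(e^x-1)F$, and solving for $F$ recovers the formula.

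I do not anticipate a substantial obstacle. The two points that require a little care are the bookkeeping of the fixed last color---so that $\alpha$ appears to the power $k-1$ rather than $k$---and the observation that because $e^x-1$ has no constant term, summing the geometric series and inverting $1-\alpha(e^x-1)$ are legitimate operations in the ring of formal power series, so no analytic convergence argument is needed.
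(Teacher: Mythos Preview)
Your proof is correct and is essentially the paper's argument spelled out: the paper invokes the composition principle with inner structure $e^x-1$ and outer structure $x/(1-\alpha x)=\sum_{k\ge 1}\alpha^{k-1}x^k$, and your summation $\sum_{k\ge 1}\alpha^{k-1}(e^x-1)^k$ is exactly that composition written out term by term. Your bookkeeping of the fixed last color and the formal-power-series justification are fine.
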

\begin{proof}
We use the composition principle of exponential generating functions.
The $\alpha$-colored ordered partition lattice can be described as a composition of two structures on the set $[n]$, namely, an inner non-empty structure given by $e^x-1$, and an outer $\alpha$-colored permutation structure with generating function given by ~$x/(1-\alpha\cdot x)$. 
\qedhere
\end{proof}
\section{$\alpha$-colored ordered set partitions, Eulerian polynomials, and the permutohedron.}
In this section we demonstrate the close relationship between the Eulerian polynomials and the permutohedron. By means of Theorem~\ref{theorem_main}, we show that the Eulerian polynomial computes the Euler characteristic of the permutohedron. The key ingredient to the proof is that the face lattice of the permutohedron $P_n$ is the ordered set partition lattice $Q_n$. In Section~\ref{alpha_complex_section} we will mirror the analogy between the permutohedron and the Eulerian polynomial with a new polytopal complex which we call the $\alpha$-colored permutohedron, and subsequently, with the $\alpha$-colored Eulerian polynomials in Section~\ref{section_polynomial}. We now proceed with the definition of descents in the symmetric group $\mathfrak{S}_n$ and of the permutohedron.

For a permutation $\pi\in\mathfrak{S}_n$, the descent set of $\pi$ is given by $D(\pi)=\{i\in[n-1]:\,\pi(i)>\pi(i+1)\}$. We let $d(\pi)$ be the number of descents of $\pi$, or $d(\pi)=|D(\pi)|$. It will often be more advantageous to think of the descent set of $\pi$ as a composition of $n$ in the usual way, and thus we define:
\begin{definition}
Let the descent set of $\pi$ in the symmetric group $\mathfrak{S}_n$ be given by $\{i_1,i_2,\dots,i_k\}$. We convert this descent set into the \emph{descent composition} of $\pi$ by
$D(\pi)=(i_1,i_2-i_1,\dots,i_k-i_{k-1},n-i_k)$.
\end{definition}
We now define the Eulerian polynomial, as in Chapter $1$ of ~\cite{EC1}.
\begin{definition}
$A_n(x)=\sum_{\pi\in\mathfrak{S}_n}x^{1+d(\pi)}$ is the \emph{Eulerian polynomial}.
\end{definition}
Recall that the \emph{permutohedron}, $P_n$, is the $(n-1)$-dimensional polytope obtained by taking the convex hull of the permutations of $\mathfrak{S}_n$ in $\mathbb{R}^n$. The $d$-dimensional faces of $P_n$ are in one to one correspondence with ordered set partitions of $n$ into $n-d$ parts. For example, the vertices of $P_n$ are
given by permutations in $\mathfrak{S}_n$, which are in bijection with ordered set partitions of $n$ into $n=n-0$ parts.  We note that the interior of $P_n$, of dimension $n-1$, must then be in bijection with ordered set partitions of $n$ into $n-(n-1)=1$ part. In other words, $P_n$ is contractible.

In parallel to Proposition~\ref{proposition_Euler_alpha} to come in Section~\ref{alpha_complex_section}, we now demonstrate that the Euler characteristic of the permutohedron $P_n$ can be computed with the Eulerian polynomial $A_n(x)$. We do this by counting the elements of the $\alpha$-colored ordered partition lattice $Q_n^\alpha$ with the Eulerian polynomial in Theorem~\ref{theorem_main}. Recall that the \emph{Stirling number of the second kind}, $S(n,k)$, is the number of partitions of $n$ into $k$ non-zero parts.
\begin{corollary}
\label{Euler_P_n}
The Euler characteristic of $P_n$ can be computed from the Eulerian polynomial $A_n(x)$.
\end{corollary}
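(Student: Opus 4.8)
The plan is to turn the topological Euler characteristic of $P_n$ into a signed face count, recognize that signed count as the $\alpha=-1$ specialization of the cardinality $|Q_n^\alpha|$, and then feed that specialization through Theorem~\ref{theorem_main}, which already expresses $|Q_n^\alpha|$ in terms of the Eulerian polynomial $A_n(x)$.

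First I would use the fact recalled above that the $d$-dimensional faces of $P_n$ correspond to the ordered set partitions of $[n]$ into $n-d$ blocks; equivalently, the face poset of $P_n$ is $Q_n$, with a face $\tau$ having dimension $n-|\tau|$. Hence, writing $f_d(P_n)=(n-d)!\,S(n,n-d)$ and reindexing the sum by the number of blocks $k=n-d$,
\[
\chi(P_n)=\sum_{d=0}^{n-1}(-1)^d f_d(P_n)=\sum_{k=1}^{n}(-1)^{n-k}\,k!\,S(n,k).
\]
Separately, counting $Q_n^\alpha$ by the number of blocks is immediate: an element with $k$ blocks consists of an ordered set partition of $[n]$ into $k$ blocks together with a choice of one of $\alpha$ colors for each of the first $k-1$ blocks, so $|Q_n^\alpha|=\sum_{k=1}^n \alpha^{k-1}\,k!\,S(n,k)$. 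Comparing the two displayed sums term by term — the ratio of the $k$-th summands is the constant $(-1)^{n+1}$, independent of $k$ — gives $\chi(P_n)=(-1)^{n+1}\,|Q_n^{-1}|$.

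Finally I would apply Theorem~\ref{theorem_main} to rewrite $|Q_n^\alpha|$ as the relevant evaluation of $A_n(x)$ and then set $\alpha=-1$, multiplying by $(-1)^{n+1}$; this produces the desired closed formula for $\chi(P_n)$ in terms of $A_n$. As a consistency check, the Eulerian side degenerates at $\alpha=-1$: every contribution except that of the reverse permutation $n\,(n-1)\cdots 1$ (which is the unique permutation in $\mathfrak{S}_n$ with $n-1$ descents) vanishes, leaving $|Q_n^{-1}|=(-1)^{n-1}$ and hence $\chi(P_n)=(-1)^{n+1}(-1)^{n-1}=1$, in agreement with the contractibility of $P_n$ noted above. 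The only delicate point I anticipate is bookkeeping: keeping the face-dimension index $d$ and the block-count index $k=n-d$ straight, carrying the extra sign that comes from the last block of $Q_n^\alpha$ having a fixed color, and — since the formula of Theorem~\ref{theorem_main} may carry a factor of $(1+\alpha)$ or $1/\alpha$ — performing the substitution $\alpha=-1$ on the polynomial expansion rather than naively in a rational expression. I expect that substitution to be the main, if modest, obstacle.
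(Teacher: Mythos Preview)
Your proposal is correct and follows essentially the same route as the paper: both arguments express $\chi(P_n)$ as the alternating face count, identify this with the $\alpha=-1$ specialization of $|Q_n^\alpha|=\sum_k k!\,S(n,k)\,\alpha^{k-1}$, and then invoke Theorem~\ref{theorem_main} to rewrite that quantity via $A_n$. Your packaging is slightly cleaner --- you absorb the parity issue into the single factor $(-1)^{n+1}$ up front, whereas the paper treats the even and odd cases separately at the end --- and you correctly anticipate that the substitution $\alpha=-1$ must be done in the expanded sum $\sum_\pi(\alpha+1)^{n-1-d(\pi)}\alpha^{d(\pi)}$ rather than in the rational form $(\alpha+1)^n/\alpha\cdot A_n(\alpha/(\alpha+1))$, which is exactly what the paper does when it isolates the surviving $\alpha^{n-1}$ term.
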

\begin{proof}
Given that the face lattice of the permutohedron is the dual of the ordered set partition lattice yields
$$\chi(P_n)=\sum_{k=0}^{n-1}(-1)^k f_k(P_n)=\sum_{k=0}^{n-1}(-1)^kS(n,n-k)k!=1,$$ since $P_n$ is contractible.

Using Theorem~\ref{theorem_main} we have that:
\begin{align*}
\sum_{k=0}^n S(n,k)k!\alpha^{k-1}&=(\alpha+1)^n/\alpha\cdot A_n(\alpha/(\alpha+1))\\
&=(\alpha+1)^n/\alpha\cdot\sum_{\pi\in\mathfrak{S}_n}(\alpha/(\alpha+1))^{1+d(\pi)}\\
&= (\alpha+1)/\alpha \cdot L+\frac{(\alpha+1)^n}{\alpha}\cdot\frac{\alpha^n}{(\alpha+1)^n}\\
&=(\alpha+1)/\alpha \cdot L+\alpha^{n-1},
\end{align*}
where $L$ is all terms of $A_n(\alpha/(\alpha+1))$ except for the term corresponding to the permutation with $n$ descents. Lastly, let $\alpha=-1$ to obtain
$\sum_{k=0}^{n}S(n,k)k!(-1)^{k-1}=(-1)^{n-1}$. The latter sum is the Euler characteristic of $P_n$ when $n$ is odd, and when $n$ is even the Euler characteristic of $P_n$ is obtained by multiplying both sides of the latter sum by $-1$.
\end{proof}
\begin{theorem}
\label{theorem_main}
The following identity holds between the Eulerian polynomial and the Stirling numbers of the second kind:$$\frac{(\alpha+1)^n}{\alpha} A_n\left(\frac{\alpha}{\alpha+1}\right)=\sum_{k=0}^n S(n,k)k!\alpha^{k-1}.$$
\end{theorem}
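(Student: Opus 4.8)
The plan is to reduce the claimed identity to the classical polynomial identity
\[
A_n(x)=\sum_{k=0}^n S(n,k)\,k!\,x^k(1-x)^{n-k},
\]
and then to specialize $x=\alpha/(\alpha+1)$. To establish this polynomial identity I would start from Euler's generating-function description of $A_n(x)$ recalled in the introduction, namely $\sum_{k\geq 0}k^n x^k = A_n(x)/(1-x)^{n+1}$, and expand the left-hand side using the standard expansion of a power into falling factorials, $k^n=\sum_{j=0}^n S(n,j)\,k(k-1)\cdots(k-j+1)=\sum_{j=0}^n S(n,j)\,j!\binom{k}{j}$.

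Next I would interchange the two summations and apply the negative binomial series $\sum_{k\geq 0}\binom{k}{j}x^k = x^j/(1-x)^{j+1}$, which gives $\sum_{k\geq 0}k^n x^k=\sum_{j=0}^n S(n,j)\,j!\,x^j/(1-x)^{j+1}$. Multiplying through by $(1-x)^{n+1}$ then yields the displayed polynomial identity. Everything here is an identity of rational functions (equivalently, of formal power series in $x$), so it is legitimate to specialize $x$ afterward.

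Finally I would set $x=\alpha/(\alpha+1)$, so that $1-x=1/(\alpha+1)$ and hence $x^k(1-x)^{n-k}=\alpha^k/(\alpha+1)^n$ for every $k$. The polynomial identity then becomes $A_n(\alpha/(\alpha+1))=(\alpha+1)^{-n}\sum_{k=0}^n S(n,k)\,k!\,\alpha^k$, and multiplying both sides by $(\alpha+1)^n/\alpha$ gives precisely $\sum_{k=0}^n S(n,k)\,k!\,\alpha^{k-1}$; the $k=0$ term vanishes since $S(n,0)=0$ for $n\geq 1$, so the apparent $\alpha^{-1}$ causes no trouble.

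Since every step above is a routine manipulation, I do not anticipate a genuine obstacle: the only points requiring a little care are recalling the correct classical identity for $A_n(x)$ to begin from, and justifying the term-by-term use of the negative binomial series, which is immediate at the level of formal power series. One could alternatively look for a bijective argument by recognizing the right-hand side as $|Q_n^\alpha|$ (each ordered set partition into $k$ blocks admitting $\alpha^{k-1}$ colorings, the last block's color being fixed) and exhibiting a descent-based enumeration of $Q_n^\alpha$; that route is closer in spirit to the later sections but is considerably more involved than the generating-function computation above.
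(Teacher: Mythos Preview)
Your proof is correct and complete: the Frobenius-type identity $A_n(x)=\sum_{k=0}^n S(n,k)\,k!\,x^k(1-x)^{n-k}$ is indeed an immediate consequence of Euler's generating-function definition together with the Stirling expansion of $k^n$, and the specialization $x=\alpha/(\alpha+1)$ yields the statement at once.

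The paper, however, takes a genuinely different, combinatorial route---in fact precisely the bijective option you sketch in your final paragraph. It first recognizes the right-hand side as $|Q_n^\alpha|$, since an ordered set partition into $k$ blocks admits $\alpha^{k-1}$ colorings with the last block's color fixed. It then constructs a surjection $P\colon Q_n^\alpha\to\mathfrak{S}_n$ by listing the elements of each block in increasing order and reading the result as a word; the fiber over a permutation $\pi$ with $d(\pi)$ descents has size $(\alpha+1)^{n-d(\pi)-1}\alpha^{d(\pi)}$, because each descent position must carry one of $\alpha$ colored bars while each ascent position is free to carry a bar or not. Summing the fiber sizes gives $(\alpha+1)^n\alpha^{-1}A_n(\alpha/(\alpha+1))$.

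Your generating-function argument is shorter and self-contained, relying only on classical identities. The paper's bijective argument is longer but supplies a direct combinatorial interpretation of both sides in terms of $\alpha$-colored ordered set partitions, and this interpretation is reused verbatim in the later computation of $\chi(P_n^\alpha)$ (Proposition~\ref{proposition_Euler_alpha}), where the same map $P$ is restricted to alternating-color partitions. So the paper's approach is chosen for its compatibility with the surrounding material rather than for economy.
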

\begin{proof}
We construct a map $P:Q_n^{\alpha}\longrightarrow\mathfrak{S}_n$ given by writing out the elements of each block of an $\alpha$-colored ordered set partition in increasing order, then considering this string as a permutation, in one line notation, in $\mathfrak{S}_n$. We will think of elements in $Q_n^\alpha$ as having bars with one of $\alpha$ colors, as discussed in Example~\ref{red_green}.

Let $\pi\in\mathfrak{S}_n$ such that $d(\pi)=k$. Any $\alpha$-colored ordered set partition in the fiber $P^{-1}(\pi)$ must have $\alpha$-colored breaks at the descents of $\pi$, but otherwise it is free to have $\alpha$-colored breaks at any position. Hence if the descent runs of $\pi$ have sizes $d_1,d_2,\dots, d_{d(\pi)+1}$, then we have that 
\begin{align*}
|P^{-1}(\pi)|&=(\alpha+1)^{d_1-1}(\alpha+1)^{d_2-1}\cdots(\alpha+1)^{d_{d(\pi)+1}-1}\alpha^{d(\pi)}\\
&=(\alpha+1)^{d_1+d_2+\dots+d_{k+1}-(k+1)}\alpha^{d(\pi)}\\
&=(\alpha+1)^{n-k-1}\alpha^{d(\pi)}\\
&=(\alpha+1)^{n-d(\pi)-1}\alpha^{d(\pi)}.
\end{align*}
Since $P$ is surjective we have that:
\begin{align*}
|Q_{n}^\alpha|&=\sum_{\pi\in\mathfrak{S}_n}|P^{-1}(\pi)|\\
&=\sum_{\pi\in\mathfrak{S}_n}(\alpha+1)^{n-d(\pi)-1}\alpha^{d(\pi)}\\
&=(\alpha+1)^{n}(1/\alpha) \sum_{\pi\in\mathfrak{S}_n}(\alpha+1)^{-d(\pi)-1}\alpha^{d(\pi)+1}\\
&=(\alpha+1)^{n} (1/\alpha) \sum_{\pi\in\mathfrak{S}_n}(\alpha/(\alpha+1))^{1+d(\pi)}\\
&=(\alpha+1)^n (1/\alpha)A_n(\alpha/(\alpha+1)).
\end{align*}
Finally, we note that the cardinality of $\alpha$-colored ordered set partitions is given by $|Q_n^{\alpha}|=\sum_{k=0}^n S(n,k)k!\alpha^{k-1}$, since the number of $\alpha$-colored ordered set partitions into $k$ blocks is counted by $k!\cdot S(n,k)$ times $\alpha^{k-1}$, with the latter term accounting for the $\alpha$ possible colors of the $k-1$ breaks.
\end{proof}
Note that Theorem~\ref{theorem_main} recovers Theorem $5.3$ of \cite{Petersen}, as the latter author defines the Eulerian polynomial as $A_n(x)=\sum_{\pi\in\mathfrak{S}_n}x^{d(\pi)}$. 
\begin{corollary} 
\label{stanley_result}
$2^n \cdot A_n(1/2)=|Q_n^1|=|Q_n|$.
\end{corollary}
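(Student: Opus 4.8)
The plan is to obtain this as an immediate specialization of Theorem~\ref{theorem_main} at $\alpha = 1$. First I would note that setting $\alpha = 1$ causes no division problem, since the denominator $\alpha$ in Theorem~\ref{theorem_main} is then just $1$. Substituting $\alpha = 1$ into the identity
$$\frac{(\alpha+1)^n}{\alpha}A_n\!\left(\frac{\alpha}{\alpha+1}\right) = \sum_{k=0}^n S(n,k)\,k!\,\alpha^{k-1},$$
the prefactor $(\alpha+1)^n/\alpha$ collapses to $2^n$ and the argument $\alpha/(\alpha+1)$ becomes $1/2$, so the left-hand side is exactly $2^n A_n(1/2)$.

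Next I would identify the right-hand side at $\alpha = 1$. Each term $S(n,k)\,k!\,\alpha^{k-1}$ becomes $S(n,k)\,k!$, and $\sum_{k=0}^n S(n,k)\,k!$ counts ordered set partitions of $[n]$ into any number of blocks, which is precisely $|Q_n|$. Finally, by Definition~\ref{Q_n_alpha} the poset $Q_n^1$ is the ordinary ordered partition lattice (one color of bar, hence no coloring data), so $|Q_n^1| = |Q_n|$, and chaining the three equalities gives $2^n A_n(1/2) = |Q_n^1| = |Q_n|$.

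There is essentially no obstacle here; the only point requiring a moment's care is simply observing that the $\alpha = 1$ specialization is legitimate. For completeness I would also remark that the result can be seen directly by running the argument in the proof of Theorem~\ref{theorem_main} at $\alpha = 1$: the fiber of $P\colon Q_n \to \mathfrak{S}_n$ over a permutation $\pi$ with $d(\pi)$ descents has size $2^{\,n-d(\pi)-1}$ (a break is forced at each descent and optional at each of the remaining $n-1-d(\pi)$ gaps), so summing over $\mathfrak{S}_n$ yields $|Q_n| = \sum_{\pi} 2^{\,n-d(\pi)-1} = 2^{n-1}\sum_{\pi} 2^{-d(\pi)} = 2^n \sum_{\pi}(1/2)^{1+d(\pi)} = 2^n A_n(1/2)$.
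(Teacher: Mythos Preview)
Your proposal is correct and follows essentially the same route as the paper: specialize Theorem~\ref{theorem_main} at $\alpha=1$, identify the right-hand side $\sum_k S(n,k)\,k!$ with $|Q_n|$, and note $Q_n^1=Q_n$. The paper's one-line justification additionally invokes ``the symmetry of the Eulerian polynomials,'' but this is not actually needed to obtain the identity as stated (it is presumably there to reconcile with the form of the exercise in~\cite{EC1}); your direct substitution is already complete, and your supplementary fiber-counting remark is a faithful unwinding of the $\alpha=1$ case of the proof of Theorem~\ref{theorem_main}.
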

The result follows by letting $\alpha=1$ in Theorem~\ref{theorem_main} and by the symmetry of the Eulerian polynomials. Additionally, Corollary~\ref{stanley_result} recaptures Exercise $33$ of Chapter $1$ of ~\cite{EC1}.

We now give a non-topological consequence of Theorem~\ref{theorem_main}.

Recall Euler's generating function definition for the Eulerian polynomials:
\begin{equation}
\label{n_power}
\sum_{k\geq 0}k^nx^k=\frac{A_n(x)}{(1-x)^{n+1}}.
\end{equation}
Using Theorem ~\ref{theorem_main} and Equation~\eqref{n_power} we obtain the following corollary.
\begin{corollary}
For $\alpha\in\mathbb{C}$ with $\operatorname{Re}(\alpha)>-1/2$ we have that
$$\sum_{k=0}^{\infty}k^n\left(\frac{\alpha}{\alpha+1}\right)^k=(\alpha+1)\cdot\alpha\sum_{k=0}^n S(n,k)k!\alpha^{k-1}.$$
\end{corollary}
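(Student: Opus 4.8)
The plan is to combine Theorem~\ref{theorem_main} with Euler's generating function identity~\eqref{n_power}, evaluated at the point $x=\alpha/(\alpha+1)$, and then to check that this point lies inside the disk of convergence precisely when $\operatorname{Re}(\alpha)>-1/2$.

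First I would record the elementary computation $1-\frac{\alpha}{\alpha+1}=\frac{1}{\alpha+1}$, so that $(1-x)^{n+1}=(\alpha+1)^{-(n+1)}$ at $x=\alpha/(\alpha+1)$. Substituting into~\eqref{n_power} gives
$$\sum_{k\geq 0}k^n\left(\frac{\alpha}{\alpha+1}\right)^k=(\alpha+1)^{n+1}A_n\left(\frac{\alpha}{\alpha+1}\right),$$
provided the left-hand series converges. Rearranging the identity of Theorem~\ref{theorem_main} yields $A_n\!\left(\frac{\alpha}{\alpha+1}\right)=\alpha(\alpha+1)^{-n}\sum_{k=0}^n S(n,k)k!\alpha^{k-1}$; plugging this in and cancelling $(\alpha+1)^{n+1}$ against $(\alpha+1)^{-n}$ leaves exactly $(\alpha+1)\cdot\alpha\sum_{k=0}^n S(n,k)k!\alpha^{k-1}$, which is the claimed right-hand side.

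The one point requiring care is the region of validity. The series $\sum_{k\geq 0}k^n z^k$ converges absolutely for $|z|<1$, and there it represents the rational function $A_n(z)/(1-z)^{n+1}$ (the identity~\eqref{n_power}, stated for real $z$, extends to the complex unit disk since both sides are analytic there and agree on $(-1,1)$). So I would check that $\bigl|\tfrac{\alpha}{\alpha+1}\bigr|<1$ is equivalent to $\operatorname{Re}(\alpha)>-1/2$: writing $\alpha=a+bi$, the inequality $|\alpha|^2<|\alpha+1|^2$ becomes $a^2+b^2<(a+1)^2+b^2$, i.e. $0<2a+1$, i.e. $a>-1/2$. This also forces $\alpha\neq-1$, so every denominator above is nonzero and Theorem~\ref{theorem_main}, being a polynomial identity in $\alpha$, applies verbatim.

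I do not anticipate a genuine obstacle: the corollary is essentially a substitution into two results already in hand, and the only subtlety — translating the convergence condition $|\alpha/(\alpha+1)|<1$ into the stated half-plane $\operatorname{Re}(\alpha)>-1/2$ — is the short computation above.
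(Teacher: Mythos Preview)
Your proposal is correct and follows exactly the approach indicated in the paper, namely substituting $x=\alpha/(\alpha+1)$ into Equation~\eqref{n_power} and applying Theorem~\ref{theorem_main}. You add the explicit verification that $|\alpha/(\alpha+1)|<1$ is equivalent to $\operatorname{Re}(\alpha)>-1/2$, which the paper leaves implicit but is precisely the justification for the stated hypothesis.
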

\section{The construction of $P_n^\alpha$}
\label{alpha_complex_section}
 When $\alpha=1$, $Q_n^\alpha$ is the usual ordered partition lattice, where the number of elements in $Q_n$ of rank $i$ count the $(n-i)$-dimensional faces of the permutohedron, $P_n$. In this section we define an analogous polytopal complex $P_n^\alpha$ whose $(n-i)$-dimensional faces are counted by the elements of rank $i$ in $Q_n^\alpha$. We call $P_n^\alpha$ the \emph{$\alpha$-colored permutohedron}, see Definition~\ref{alpha_complex}.

Over the course of this section we will see that $P_n^\alpha$ has many similarities to the usual permutohedron $P_n$. Namely,  the face lattice of $P_n^\alpha$ is $Q_n^\alpha$, while the face lattice of $P_n$ is $Q_n$. Furthermore, Proposition~\ref{proposition_Euler_alpha} shows that the Euler characteristic of $P_n^\alpha$ can be computed with the Eulerian polynomial $A_n(x)$ in a similar fashion to Corollary~\ref{Euler_P_n}. Lastly, the components of $P_n^\alpha$ are products of permutohedra, and therefore $P_n^\alpha$ is a union of contractible components, just as $P_n$ is contractible.

Keeping in mind that the face lattice of $P_n^\alpha$ should be $Q_n^\alpha$, we reverse engineer the construction of $P_n^\alpha$ by describing the facets of the polytopal complex--each of which will determine a unique connected component of the complex.

Let $\vec{c}=(c_1,c_2,\dots,c_k)$ be a composition of $n$. Recall the \emph{multinomial coefficient} $\binom{n}{\vec{c}}=\binom{n}{c_1,c_2,\dots,c_k}$. Notice that there are ~$(\alpha-1)^{k-1}\cdot\binom{n}{\vec{c}}$ elements of $Q_n^\alpha$ of type $\vec{c}$ with no adjacent blocks of the same color. For each of these alternating color ordered partitions we define the facet $P_{|c_1|}\times P_{|c_2|}\times\dots\times P_{|c_k|}$ of $P_n^\alpha$. As the goal is to define a polytopal complex $P_n^\alpha$ with face lattice $Q_n^\alpha$, defining facets of $P_n^\alpha$ in this manner makes sense. This is because an $\alpha$-colored ordered set partition $\tau$ of type $\vec{c}$ with alternating colors is not covered by any element in $Q_n^\alpha$ per Definition~\ref{Q_n_alpha}, and thus should correspond to a facet of $P_n^\alpha$. 

Moreover, any element $\tau'\in Q_n^\alpha$ with $\tau'\leq\tau$ can be formed by splitting blocks of $\tau$ and by  flipping blocks of $\tau$ of the same color. This splitting and flipping can be done independently, and since the faces of the usual permutohedron $P_n$ are enumerated by the ordered partition lattice $Q_n$, the face lattice of the component $P_{|c_1|}\times P_{|c_2|}\times\dots\times P_{|c_k|}$ will be the lower order ideal generated by its defining alternating color ordered set partition $\tau$ of type $\vec{c}$ in $Q_n^\alpha$. Since our components are disjoint, this construction will yield $f(P_n^\alpha)=Q_n^\alpha$, and gives the following definition.
\begin{definition}[$\alpha$-colored permutohedron]
\label{alpha_complex}
Let $P_n^\alpha$ be the polytopal complex with $(\alpha-1)^{k-1}\cdot\binom{n}{\vec{c}}$ disjoint facets $P_{|c_1|}\times P_{|c_2|}\times\dots\times P_{|c_k|}$ for each composition $\vec{c}=(c_1,c_2,\dots,c_k)$ of $n$. Each of these facets is labeled by a unique $\alpha$-colored ordered partition, $\tau$, of type $\vec{c}$ with no adjacent blocks of the same color.
The lower dimensional faces of codimension $i$ in the facet labeled by $\tau$ are labeled by $\alpha$-colored ordered set partitions in the lower order ideal generated by $\tau$ in $Q_n^\alpha$ into $|\tau|+i$ parts. By virtue of construction, we have that the face lattice of $P_n^\alpha$ is given by $Q_n^\alpha$, that is $f(P_n^\alpha)=Q_n^\alpha$.
\end{definition}
We now look at an example.
\begin{example}
{\rm
Figure~\ref{blue_red} shows $P_3^2$. Per Definition~\ref{alpha_complex}, $P_3^2$ has facets labeled by $2$-colored ordered set partitions with alternating colors, which we mark as bald blocks and hatted blocks. Instead of a fixed last color, we force the last block of each $2$-colored ordered set partition to be hatted. We now compute the facets of $P_3^2$ using Lemma~\ref{components_p_n}. The compositions of three are $(1,1,1),(1,2),(2,1)$ and $(3)$. Since $\binom{3}{1,1,1}=6$, $\binom{3}{2,1}=3$, $\binom{3}{1,2}=3$ and $\binom{3}{3}=1$, there are $6$ facets of type $(1,1,1)$, $3$ facets of type $(2,1)$, $3$ facets of type $(1,2)$, and $1$ facet of type $(3)$. Each of these facets must be alternating in color with last color hatted.

}\end{example}
We now list relevant topological properties of $P_n^\alpha$.
\begin{lemma}
\label{components_p_n}
The number of components of $P_n^\alpha$ is given by
\begin{equation}
\label{eq:comp}
\sum_{\vec{c}\in\Comp{(n)}}(\alpha-1)^{|\vec{c}\,|-1}\cdot\binom{n}{\vec{c}}.
\end{equation}
\end{lemma}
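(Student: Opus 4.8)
The plan is to identify the connected components of $P_n^\alpha$ with its facets --- each taken together with all of its faces --- and then to read off their number from Definition~\ref{alpha_complex}. The only step carrying real content is verifying that distinct facets (with their faces) form disjoint subcomplexes; this rests on a combinatorial property of $Q_n^\alpha$ that also makes Definition~\ref{alpha_complex} well posed.

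First I would show that every element of $Q_n^\alpha$ lies below a \emph{unique} maximal element. The maximal elements of $Q_n^\alpha$ are exactly the $\alpha$-colored ordered set partitions with no two adjacent blocks of the same color, since precisely these admit no cover-relation merge. Given $\tau'\in Q_n^\alpha$, group its blocks into maximal runs of consecutive blocks sharing one color; since any element above $\tau'$ is obtained by merging adjacent same-colored blocks, such an element only merges blocks lying within one of these runs, so collapsing each run entirely yields the unique maximal element $\widehat{\tau'}\ge\tau'$. Hence $Q_n^\alpha$ is the disjoint union of the principal order ideals of its maximal elements. Transporting this through $f(P_n^\alpha)=Q_n^\alpha$, I get that $P_n^\alpha$ is the disjoint union, over the alternating-color labels $\tau$ of type $\vec c=(c_1,\dots,c_k)$, of the subcomplex $C_\tau$ consisting of the facet $P_{|c_1|}\times\cdots\times P_{|c_k|}$ and all of its faces. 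Each $C_\tau$ is closed (being closed under passage to faces) and open (its complement is the union of the other $C_{\tau'}$, hence closed), and it is connected --- indeed contractible --- as a product of permutohedra. So each $C_\tau$ is a single connected component, and the number of components of $P_n^\alpha$ equals the number of facets.

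It then remains to count the facets via Definition~\ref{alpha_complex}: for a composition $\vec c=(c_1,\dots,c_k)\in\Comp(n)$ there are $\binom{n}{\vec c}$ choices of the underlying ordered set partition of type $\vec c$ and, with the last block's color fixed, $(\alpha-1)^{k-1}$ colorings of the remaining blocks in which no two adjacent blocks agree (each of blocks $k-1,\dots,1$ having $\alpha-1$ admissible colors given the one to its right); summing $(\alpha-1)^{|\vec c|-1}\binom{n}{\vec c}$ over $\vec c\in\Comp(n)$ gives \eqref{eq:comp}. The expected obstacle is the uniqueness statement in the first step: it is exactly what keeps two of the ``disjoint'' facets from sharing a face, and with it established the remainder is bookkeeping already implicit in Definition~\ref{alpha_complex}.
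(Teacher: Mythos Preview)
Your argument is correct and follows the same line as the paper's: the sum counts the alternating-color elements of $Q_n^\alpha$, and each such element labels a facet that, together with its faces, is a single connected component. The paper states this in one sentence (relying on the word ``disjoint'' in Definition~\ref{alpha_complex}), while you have helpfully made explicit the uniqueness of the maximal element above any $\tau'\in Q_n^\alpha$ that underpins that disjointness.
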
 
The above sum enumerates all elements of $Q_n^\alpha$ with adjacent blocks having different colors. Since each of these partitions determine a component, the result follows.
\begin{corollary}
The number of connected components of $P_n^\alpha$ is the total number of faces in $P_n^{\alpha-1}$.
\end{corollary}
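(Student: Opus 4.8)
The plan is to compare the two quantities by routing both of them through the cardinality of the $(\alpha-1)$-colored ordered partition lattice. On one side, Lemma~\ref{components_p_n} says that the number of connected components of $P_n^\alpha$ equals $\sum_{\vec c\in\Comp(n)}(\alpha-1)^{|\vec c\,|-1}\binom{n}{\vec c}$. On the other side, by Definition~\ref{alpha_complex} the face lattice of $P_n^{\alpha-1}$ is $Q_n^{\alpha-1}$, so the total number of faces of $P_n^{\alpha-1}$ is just $|Q_n^{\alpha-1}|$. Hence it suffices to establish the identity
$$\sum_{\vec c\in\Comp(n)}(\alpha-1)^{|\vec c\,|-1}\binom{n}{\vec c}=|Q_n^{\alpha-1}|.$$

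First I would group the left-hand sum by $k=|\vec c\,|$, the number of parts, rewriting it as $\sum_{k=1}^n(\alpha-1)^{k-1}\sum_{\vec c\,:\,|\vec c\,|=k}\binom{n}{c_1,\dots,c_k}$. The inner sum counts the ways to distribute the labeled set $[n]$ into an ordered list of $k$ nonempty blocks, which is exactly $k!\,S(n,k)$ (equivalently, the number of surjections $[n]\to[k]$). Substituting gives $\sum_{k=1}^n S(n,k)\,k!\,(\alpha-1)^{k-1}$, and this is precisely the expression for $|Q_n^{\alpha-1}|$ recorded at the end of the proof of Theorem~\ref{theorem_main}, with $\alpha$ replaced by $\alpha-1$: a $(\alpha-1)$-colored ordered set partition into $k$ blocks is specified by an ordered set partition into $k$ blocks, counted by $k!\,S(n,k)$, together with one of $\alpha-1$ colors for each of the $k-1$ breaks. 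This closes the chain of equalities and proves the corollary.

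There is no real obstacle here; the two points requiring a little care are the standard combinatorial identity $\sum_{|\vec c\,|=k}\binom{n}{\vec c}=k!\,S(n,k)$, and keeping straight which parameter plays the role of the number of colors ($\alpha-1$, not $\alpha$). It is also worth remarking that the displayed identity is a polynomial identity in $\alpha$, so it holds for every integer $\alpha\geq 1$, even though the polytopal complex $P_n^{\alpha-1}$ only carries its literal meaning once $\alpha-1\geq 1$.
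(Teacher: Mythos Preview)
Your proof is correct and essentially matches the paper's one-line argument: both sides equal $|Q_n^{\alpha-1}|$. The paper is slightly more direct, interpreting each summand $(\alpha-1)^{|\vec c\,|-1}\binom{n}{\vec c}$ immediately as the number of $(\alpha-1)$-colored ordered set partitions of type $\vec c$ rather than first regrouping by $k=|\vec c\,|$ and passing through the Stirling-number formula, but the idea is the same.
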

We can view equation \eqref{eq:comp}  as summing over all $(\alpha-1)$-colored ordered set partitions of type $\vec{c}$, and the result follows from Lemma~\ref{components_p_n}.
\begin{corollary}
\label{euler_alpha_perm}
The Euler characteristic $\chi(P_n^\alpha)$ can be counted in two ways as
\begin{equation}
\label{eq:Eulerian_des}
\sum_{k=0}^n (-1)^k S(n,n-k)\cdot(n-k)!\cdot\alpha^{n-k-1}=\sum_{\vec{c}\in\Comp{(n)}}(\alpha-1)^{|\vec{c}\,|-1}\cdot\binom{n}{\vec{c}}.
\end{equation}
\end{corollary}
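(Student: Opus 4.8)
The plan is to compute $\chi(P_n^\alpha)$ in two different ways: once from the face lattice $f(P_n^\alpha)=Q_n^\alpha$ via the alternating sum of face numbers, and once from the decomposition of $P_n^\alpha$ into its connected components, and then to equate the results.

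For the left-hand side, I would use that the Euler characteristic of a polytopal complex is the alternating sum $\chi=\sum_{d\geq 0}(-1)^d f_d$ of its face numbers, exactly as in the computation of $\chi(P_n)$ in Corollary~\ref{Euler_P_n}. By Definition~\ref{alpha_complex}, a face of $P_n^\alpha$ labeled by an $\alpha$-colored ordered set partition with $j$ blocks has dimension $n-j$: inside a facet $P_{|c_1|}\times\cdots\times P_{|c_k|}$, splitting the $i$'th block into $j_i$ sub-blocks produces a face of dimension $c_i-j_i$ in the $i$'th factor, so the total dimension is $\sum_i(c_i-j_i)=n-j$ with $j=\sum_i j_i$. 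Since the number of $\alpha$-colored ordered set partitions of $[n]$ into $j$ blocks is $S(n,j)\,j!\,\alpha^{j-1}$ — as already recorded in the proof of Theorem~\ref{theorem_main}, the factor $\alpha^{j-1}$ accounting for the colors of the $j-1$ internal bars — we obtain $f_{n-j}(P_n^\alpha)=S(n,j)\,j!\,\alpha^{j-1}$. Reindexing by $k=n-j$ gives
$$\chi(P_n^\alpha)=\sum_{j=1}^{n}(-1)^{n-j}S(n,j)\,j!\,\alpha^{j-1}=\sum_{k=0}^{n}(-1)^{k}S(n,n-k)\,(n-k)!\,\alpha^{n-k-1},$$
where the $k=n$ term may be included harmlessly since $S(n,0)=0$ for $n\geq 1$.

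For the right-hand side, I would invoke additivity of $\chi$ over disjoint unions together with the structure of the components. Each component of $P_n^\alpha$ is a product of permutohedra $P_{|c_1|}\times\cdots\times P_{|c_k|}$; each $P_{|c_i|}$ is contractible, hence so is the product, so each component has Euler characteristic $1$. Therefore $\chi(P_n^\alpha)$ equals the number of connected components of $P_n^\alpha$, which by Lemma~\ref{components_p_n} is $\sum_{\vec c\in\Comp(n)}(\alpha-1)^{|\vec c\,|-1}\binom{n}{\vec c}$. Equating the two expressions for $\chi(P_n^\alpha)$ yields \eqref{eq:Eulerian_des}.

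The only point requiring genuine care is the index bookkeeping: verifying that a $j$-block face really has dimension $n-j$ and carries $\alpha^{j-1}$ colorings, so that the alternating sum over faces matches the stated left-hand side term by term. Once this is in place, the identity follows immediately from additivity of the Euler characteristic, contractibility of products of permutohedra, and Lemma~\ref{components_p_n}; no further computation is needed.
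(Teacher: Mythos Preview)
Your proposal is correct and follows essentially the same argument as the paper: compute $\chi(P_n^\alpha)$ once as the alternating sum of face numbers coming from $Q_n^\alpha$ (giving the left-hand side), and once as the number of connected components via contractibility of each product of permutohedra together with Lemma~\ref{components_p_n} (giving the right-hand side). You have simply supplied more detail on the dimension/color bookkeeping than the paper does.
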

Since $P_n^\alpha$ is a union of contractible components, Lemma~\ref{components_p_n} counts the Euler characteristic of $P_n^\alpha$. We can also count the Euler characteristic by the alternating sum of the face numbers of $Q_n^\alpha$, which is the left hand side of \eqref{eq:Eulerian_des}.

Note that letting $\alpha=1$ in Corollary~\ref{euler_alpha_perm} recaptures that the usual permutohedron is contractible, and thus has Euler characteristic $1$.

Lastly, to complete the analogy between $P_n^\alpha$ and the usual permutohedron $P_n$, we show that the Eulerian polynomial can also be used to compute the Euler characteristic $\chi(P_n^\alpha)$, just as we showed the Eulerian polynomial can be used to compute $\chi(P_n)$ in Corollary \ref{Euler_P_n}.
\begin{proposition}
\label{proposition_Euler_alpha}
The Euler characteristic of $P_n^\alpha$ is given by 
$$\chi(P_n^\alpha)=\frac{(\alpha-1)^n}{\alpha}A_n\left(\frac{\alpha}{\alpha-1}\right).$$
\end{proposition}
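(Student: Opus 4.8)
The plan is to combine three things already in hand: the contractibility of the components of $P_n^\alpha$, Theorem~\ref{theorem_main} evaluated at the shifted parameter $\alpha-1$, and the classical symmetry (functional equation) of the Eulerian polynomial. No new combinatorics is needed; the argument is a short chain of substitutions.

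First I would record that $\chi(P_n^\alpha) = |Q_n^{\alpha-1}|$. Indeed, each component of $P_n^\alpha$ is a product of permutohedra and hence contractible, so $\chi(P_n^\alpha)$ equals the number of components of $P_n^\alpha$; by the corollary following Lemma~\ref{components_p_n} this is the total number of faces of $P_n^{\alpha-1}$, and since $f(P_n^{\alpha-1}) = Q_n^{\alpha-1}$ we get $\chi(P_n^\alpha) = |Q_n^{\alpha-1}|$. (Equivalently, one can start from the right-hand side of \eqref{eq:Eulerian_des} and observe directly that $\sum_{\vec{c}\in\Comp(n)}(\alpha-1)^{|\vec{c}\,|-1}\binom{n}{\vec{c}}$ is exactly the count of $(\alpha-1)$-colored ordered set partitions, i.e.\ $|Q_n^{\alpha-1}|$.) Now apply Theorem~\ref{theorem_main} with $\alpha$ replaced by $\alpha-1$; using the evaluation $|Q_n^{\alpha-1}| = \sum_{k=0}^n S(n,k)\,k!\,(\alpha-1)^{k-1}$ established in its proof, this yields
$$\chi(P_n^\alpha) \;=\; \frac{\alpha^n}{\alpha-1}\,A_n\!\left(\frac{\alpha-1}{\alpha}\right).$$

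It remains to replace the argument $\tfrac{\alpha-1}{\alpha}$ by its reciprocal $\tfrac{\alpha}{\alpha-1}$. Here I would invoke the symmetry of the Eulerian distribution: the number of $\pi\in\mathfrak{S}_n$ with $k$ descents equals the number with $n-1-k$ descents, so with the normalization $A_n(x)=\sum_{\pi\in\mathfrak{S}_n}x^{1+d(\pi)}$ (a polynomial of degree $n$ whose lowest term is $x$) one has the functional equation $A_n(x)=x^{n+1}A_n(1/x)$. Taking $x=\tfrac{\alpha-1}{\alpha}$ and substituting gives
$$\frac{\alpha^n}{\alpha-1}\,A_n\!\left(\frac{\alpha-1}{\alpha}\right) \;=\; \frac{\alpha^n}{\alpha-1}\left(\frac{\alpha-1}{\alpha}\right)^{n+1} A_n\!\left(\frac{\alpha}{\alpha-1}\right) \;=\; \frac{(\alpha-1)^n}{\alpha}\,A_n\!\left(\frac{\alpha}{\alpha-1}\right),$$
which is the asserted formula.

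I do not anticipate a real obstacle; the only points requiring care are bookkeeping. First, one must track the parameter shift $\alpha\mapsto\alpha-1$ so that the $+1$ appearing inside Theorem~\ref{theorem_main} cancels the $-1$. Second, one must use the correct exponent in $A_n(x)=x^{n+1}A_n(1/x)$ for the convention $A_n(x)=\sum_\pi x^{1+d(\pi)}$ used here (it is $n+1$, not $n-1$, precisely because of the extra factor of $x$). Finally, the manipulations above are valid for $\alpha\neq 0,1$; since both sides are genuine polynomials in $\alpha$ (the left-hand side by Corollary~\ref{euler_alpha_perm}), the identity extends to all $\alpha$ by continuity, and at $\alpha=1$ it specializes to $\chi(P_n^1)=1$, as it must.
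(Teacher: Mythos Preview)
Your argument is correct and is genuinely different from the paper's. The paper proves the proposition by essentially rerunning the combinatorics of Theorem~\ref{theorem_main} on the subset of alternating-color partitions: it restricts the surjection $P:Q_n^\alpha\to\mathfrak{S}_n$ to facet labels, expresses each fiber $P_A^{-1}(\pi)$ as a sum over refinements of the descent composition, identifies that sum with the rank generating function of a product of Boolean algebras, and simplifies. You instead observe that $\chi(P_n^\alpha)$ is literally $|Q_n^{\alpha-1}|$, invoke Theorem~\ref{theorem_main} at the shifted parameter $\alpha-1$, and then convert $A_n\!\left(\tfrac{\alpha-1}{\alpha}\right)$ to $A_n\!\left(\tfrac{\alpha}{\alpha-1}\right)$ via the palindromicity $A_n(x)=x^{n+1}A_n(1/x)$. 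Your route is shorter and makes transparent that Proposition~\ref{proposition_Euler_alpha} is, up to the Eulerian symmetry, the same statement as Theorem~\ref{theorem_main}; the paper's route, by contrast, gives an independent combinatorial count of the alternating-color partitions and does not rely on the symmetry of the Eulerian numbers. Your remark that both sides are polynomials in $\alpha$, so the identity extends through the apparent singularities at $\alpha=0,1$, is a clean way to close the argument.
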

\begin{proof}
This proof will mimic the proof of Theorem~\ref{theorem_main}.

Consider the restriction of the map $P:Q_n^\alpha\longrightarrow\mathfrak{S}_n$ to partitions alternating in color. We denote this map $P_A$. Since partitions alternating in colors are the facets of $P_n^{\alpha}$, the sum $\sum_{\pi\in\mathfrak{S}_n}|P_A^{-1}(\pi)|$ will give us our desired Euler characteristic.

For a fixed $\pi\in\mathfrak{S}_n$, the size of the fiber $P_A^{-1}(\pi)$ is given by
$$|P_A^{-1}(\pi)|=\sum_{\vec{d}\,\leq D(\pi)}(\alpha-1)^{|\vec{d}\,|-1},$$
as we are allowed to add breaks between descents of $\pi$ while maintaining alternating colors.

Therefore,
\begin{align*}
\chi(P_n^\alpha)&=\sum_{\pi\in\mathfrak{S}_n}|P_A^{-1}(\pi)|\\
&=\sum_{\pi\in\mathfrak{S}_n}\sum_{\vec{d}\,\leq D(\pi)}(\alpha-1)^{|\vec{d}\,|-1}\\
&=(\alpha-1)^{n-1}\sum_{\pi\in\mathfrak{S}_n}\sum_{\vec{d}\,\leq D(\pi)}(1/(\alpha-1))^{n-|\vec{d}\,|}.
\end{align*}
Notice that the last sum $\sum_{\vec{d}\,\leq D(\pi)}(\alpha-1)^{n-|\vec{d}\,|}$ is the rank generating function for the lower order ideal generated by $D(\pi)=\vec{c}=(c_1,c_2,\dots,c_k)$ in $\Comp(n)$, evaluated at $\alpha-1$. As lower order ideals in $\Comp(n)$ are products of Boolean algebras, we may express this inner sum as a product of rank generating functions for corresponding Boolean algebras:
\begin{align*}
\chi(P_n^\alpha)&=(\alpha-1)^{n-1}\sum_{\pi\in\mathfrak{S}_n}\sum_{\vec{d}\,\leq D(\pi)}(1/(\alpha-1))^{n-|\vec{d}\,|}\\
&=(\alpha-1)^{n-1}\sum_{\pi\in\mathfrak{S}_n}\prod_{i=1}^{k}F_{B_{c_i}}(1/(\alpha-1))\\
&=(\alpha-1)^{n-1}\sum_{\pi\in\mathfrak{S}_n}\prod_{i=1}^{k}(1+1/(\alpha-1))^{c_i-1}\\
&=(\alpha-1)^{n-1}\sum_{\pi\in\mathfrak{S}_n}\left(\frac{\alpha}{\alpha-1}\right)^{n-k}\\
&=\frac{(\alpha-1)^n}{\alpha}A_n\left(\frac{\alpha}{\alpha-1}\right).
\end{align*}
Observe we have used that the rank generating function of the Boolean Algebra $B_n$ is given by $F_{B_n}(x)=(1+x)^{n-1}$. While some steps at the end of the calculation have been omitted, the reader may see Theorem~\ref{theorem_main} for similar reasoning.
\end{proof}
\begin{figure}
\caption{$P_3^2$ with ``colors" bald and hatted. We force the last block to be hatted. Notice that each connected
component of $P_3^2$ is a product of permutohedra. The hexagon, a two dimensional permutohedron, also has edges and vertices labeled by $2$ colored ordered set partitions, all of color type hatted, hatted, hatted. Each of the six edges are a product of zero dimensional permutohedra, with facets having color type bald, hatted. Lastly, the disconnected vertices are all zero dimensional permutohedra, with facets of color type hatted, bald, hatted.}
\begin{tikzpicture}

\node at (4,2)[label=above:{1}-{2}-{$\hat{3}$}]{$\bullet$};
\node at (6,2)[label=above:{2}-{1}-{$\hat{3}$}]{$\bullet$};
\node at (5,1.7){12-{$\hat{3}$}};
\node at (4,0.25)[label=above:{1}-{3}-{$\hat{2}$}]{$\bullet$};
\node at (6,0.25)[label=above:{3}-{1}-{$\hat{2}$}]{$\bullet$};
\node at (5,-0.05){13-{$\hat{2}$}};
\node at (4,-1.5)[label=above:{3}-{2}-{$\hat{1}$}]{$\bullet$};
\node at (6,-1.5)[label=above:{2}-{3}-{$\hat{1}$}]{$\bullet$};
\node at (5,-1.8){23-{$\hat{1}$}};
\node at (8,2)[label=above:{1}-{$\hat{2}$}-{$\hat{3}$}]{$\bullet$};
\node at (10,2)[label=above:{1}-{$\hat{3}$}-{$\hat{2}$}]{$\bullet$};
\node at (9,1.7){1-{$\hat{2}\hat{3}$}};
\node at (8,0.25)[label=above:{2}-{$\hat{1}$}-{$\hat{3}$}]{$\bullet$};
\node at (10,0.25)[label=above:{2}-{$\hat{3}$}-{$\hat{1}$}]{$\bullet$};
\node at (9,-0.05){2-{$\hat{1}\hat{3}$}};
\node at (8,-1.5)[label=above:{3}-{$\hat{1}$}-{$\hat{2}$}]{$\bullet$};
\node at (10,-1.5)[label=above:{3}-{$\hat{2}$}-{$\hat{1}$}]{$\bullet$};
\node at (9,-1.8){3-{$\hat{1}\hat{2}$}};
\node at (-1,-1)[label=above:{$\hat{1}$}-{2}-{$\hat{3}$}]{$\bullet$};
\node at (1,-1)[label=above:{$\hat{2}$}-{1}-{$\hat{3}$}]{$\bullet$};
\node at (2.5,-1)[label=above:{$\hat{3}$}-{1}-{$\hat{2}$}]{$\bullet$};
\node at (-1,-2)[label=above:{$\hat{1}$}-{3}-{$\hat{2}$}]{$\bullet$};
\node at (1,-2)[label=above:{$\hat{2}$}-{3}-{$\hat{1}$}]{$\bullet$};
\node at (2.5,-2)[label=above:{$\hat{3}$}-{2}-{$\hat{1}$}]{$\bullet$};

\draw[thick,pattern=north west lines, pattern color=gray!80](-1,1)--(0,0)--(2,0)--(3,1)--(2,2)--(0,2)--(-1,1);
\node (1) at (-1,1){$\bullet$};
\node (2) at (0,0){$\bullet$};
\node (3) at (2,0){$\bullet$};
\node (4) at (3,1){$\bullet$};
\node (5) at (2,2){$\bullet$};
\node (6) at (0,2){$\bullet$};
\draw[thick](4,2)--(6,2);
\draw[thick](4,0.25)--(6,0.25);
\draw[thick](4,-1.5)--(6,-1.5);
\draw[thick](8,2)--(10,2);
\draw[thick](8,0.25)--(10,0.25);
\draw[thick](8,-1.5)--(10,-1.5);
\node at (1,1){\LARGE$\hat{1}\hat{2}\hat{3}$};
\end{tikzpicture}
\label{blue_red}
\end{figure}
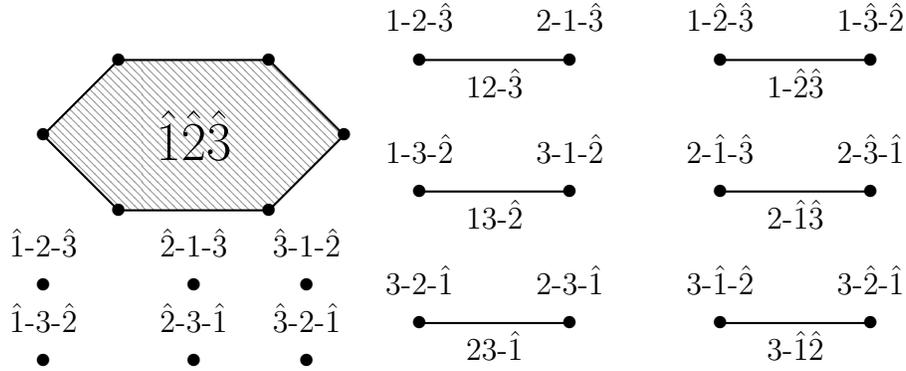

\section{$\alpha$-colored Eulerian polynomials}
\label{section_polynomial}
We continue with the narrative started in Section ~\ref{alpha_complex_section}, namely, we generalize properties of the usual permutohedron to the $\alpha$-colored permutohedron. Associated with any simplicial complex is its $h$-polynomial, the polynomial obtained by applying a certain transformation to the $f$-polynomial. It is known that the $h$-polynomial of the usual permutohedron is the Eulerian polynomial $A_n(x)$.  By extending the $h$-polynomial to polytopal complexes, we now explore the $\alpha$-colored analog for the $\alpha$-colored permutohedron, $P_n^\alpha$.

\begin{definition}
\label{h_poly}
The \emph{$h$-polynomial} of a polytopal complex $P$ of dimension $n$ is the polynomial obtained by transforming the $f$-polynomial of $P$ by $h(t) = (1-t)^{n-1}f(t/(1-t))$. 
\end{definition}
\begin{remark}
\label{remark_h_poly}
Note that an alternate definition of the $h$-polynomial of a simplicial or polytopal complex is $h(t)=f(t-1)$, for $f(t)$ the $f$-polynomial of the complex. This definition will yield the reverse of the $h$-polynomial given in Definition~\ref{h_poly}.
\end{remark}
Computing the $h$-polynomial for the $\alpha$-colored permutohedron $P_n^\alpha$ will give us a generalization of the Eulerian polynomials, which we will refer to as the $\alpha$-colored Eulerian polynomials.
\begin{definition}
\label{alpha_polynomial}
The \emph{$\alpha$-colored Eulerian polynomial}, denoted $A_n^\alpha(t)$, is defined to be the $h$-polynomial of $P_n^\alpha$.
\end{definition}

\begin{proposition}
\label{f_alpha}
The $f$-polynomial of $P_n^\alpha$ is given by the $f$ polynomial for the usual permutohedron evaluated at $\alpha\cdot t$, that is, $f(P_n^\alpha;t)=f(P_n;\alpha\cdot t)$.
\end{proposition}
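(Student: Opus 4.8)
The plan is to count faces of $P_n^\alpha$ directly from its definition and compare with the face count of $P_n$. Recall from Definition~\ref{alpha_complex} that the $(n-i)$-dimensional faces of $P_n^\alpha$ are in bijection with the rank-$i$ elements of $Q_n^\alpha$, i.e.\ with $\alpha$-colored ordered set partitions of $[n]$ into $i+1$ blocks (with the last block a fixed color). So I would first write $f_j(P_n^\alpha)$, the number of $j$-dimensional faces, as the number of $\alpha$-colored ordered set partitions into $n-j$ blocks. Since an ordered set partition into $k$ blocks is counted by $k!\,S(n,k)$ and the $k-1$ breaks each carry one of $\alpha$ colors, we get $f_j(P_n^\alpha) = (n-j)!\,S(n,n-j)\,\alpha^{n-j-1}$. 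For the ordinary permutohedron, $\alpha=1$ gives $f_j(P_n) = (n-j)!\,S(n,n-j)$.

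Next I would assemble the $f$-polynomial. Writing $f(P;t) = \sum_{j} f_j(P)\, t^{j+1}$ (the convention implicit in Definition~\ref{h_poly}, matching $h(t)=(1-t)^{n-1}f(t/(1-t))$ so that the empty face or a normalization is handled consistently with the $\alpha=1$ case recovering $A_n$), one computes
\begin{align*}
f(P_n^\alpha;t) &= \sum_{j} (n-j)!\,S(n,n-j)\,\alpha^{n-j-1}\, t^{j+1}\\
&= \sum_{k} k!\,S(n,k)\,\alpha^{k-1}\, t^{n-k+1},
\end{align*}
reindexing by $k = n-j$. Comparing with $f(P_n;t) = \sum_{k} k!\,S(n,k)\, t^{n-k+1}$, each term of the $\alpha$-version is obtained from the corresponding term of $f(P_n;t)$ by the substitution $t \mapsto \alpha t$, because the power of $t$ is $n-k+1$ while the extra factor is $\alpha^{k-1}$, and $(\alpha t)^{n-k+1}$ versus $t^{n-k+1}$ differs by $\alpha^{n-k+1}$ — so I need to check the bookkeeping: the cleanest route is to factor $f(P_n^\alpha;t) = \alpha^{?}\cdot(\text{something in }\alpha t)$, and the claim $f(P_n^\alpha;t)=f(P_n;\alpha t)$ forces the exponent accounting to work out term by term, which it does precisely because the color exponent $k-1$ and the face-dimension exponent are linked through $n$.

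\textbf{Main obstacle.} The only real subtlety is pinning down the normalization conventions so that the identity holds on the nose rather than up to a monomial factor: one must be careful about whether the $f$-polynomial includes the empty face, whether the interior (top-dimensional) face is counted, and which variable convention in Definition~\ref{h_poly} is in force. Once the indexing is fixed consistently with the stated $h$-polynomial transformation (and checked against the known fact that $h(P_n;t)=A_n(t)$), the substitution $t\mapsto\alpha t$ is immediate from the formula $f_j(P_n^\alpha) = \alpha^{n-1-j} f_j(P_n)$ combined with the fact that a $j$-face contributes a power of $t$ that, together with $n$, produces exactly the exponent $n-1-j$ on $\alpha$ when $t$ is scaled. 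I would therefore organize the write-up as: (1) face count of $P_n^\alpha$ via $Q_n^\alpha$; (2) the displayed $f$-polynomial computation; (3) term-by-term comparison yielding $f(P_n^\alpha;t)=f(P_n;\alpha t)$.
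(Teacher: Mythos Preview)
Your approach is exactly the paper's: count faces of $P_n^\alpha$ via $Q_n^\alpha$, extract the power of $\alpha$, and compare term by term with $P_n$. The key relation you derive, $f_j(P_n^\alpha)=\alpha^{\,n-1-j}f_j(P_n)$, is correct and is the whole content of the argument.

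The gap is in the step you yourself flag. With your convention $f(P;t)=\sum_j f_j\,t^{j+1}$ the identity $f(P_n^\alpha;t)=f(P_n;\alpha t)$ is \emph{false}, not merely off by a monomial: for $n=3$, $\alpha=2$ one has $(f_0,f_1,f_2)=(24,12,1)$ for $P_3^2$ versus $(6,6,1)$ for $P_3$, and no global monomial rescaling fixes all three ratios $4,2,1$ simultaneously under $t\mapsto 2t$ in your indexing. Your sentence ``the claim \ldots forces the exponent accounting to work out'' is circular and does not close the argument. The resolution is that the $f$-polynomial in Definition~\ref{h_poly} is recorded by \emph{codimension}: the coefficient of $t^k$ is the number of $(n-1-k)$-dimensional faces, equivalently the number of ordered set partitions of $[n]$ into $k+1$ blocks. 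Such a partition has exactly $k$ blocks with a free color (the last being fixed), so the $\alpha$-colored count picks up precisely $\alpha^k$, and $f(P_n^\alpha;t)=\sum_k \alpha^k f_{n-1-k}(P_n)\,t^k=f(P_n;\alpha t)$ on the nose. You can confirm this is the intended convention by checking that $(1-t)^{n-1}f(P_n;t/(1-t))$ actually returns $A_n(t)$; with the dimension-first convention it does not (for $P_3$ it gives $6-6t+t^2$, not $1+4t+t^2$). Once you adopt the codimension convention, your steps (1)--(3) go through with no further work.
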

\begin{proof}
Faces of dimension $n-k$ in the $\alpha$-colored permutohedron $P_n^\alpha$ are in one to one correspondence with the elements of rank $k$ in $Q_n^{\alpha}$. An $\alpha$-colored ordered set partition of rank $k$ has $n-k+1$ parts, of which $n-k$ are free to have any of $\alpha$ colors. Therefore, the number of elements of rank $k$ in $Q_n^\alpha$ equals the number of elements of rank $k$ in the usual ordered set partition lattice $Q_n$ times $\alpha^{n-k}$. Translating back to the permutohedron, this implies that the number of elements of dimension $n-k$ in $P_n^\alpha$ is $\alpha^{n-k}$ times the number of elements of dimension $n-k$ in $P_n$, and the result follows.
\end{proof}
\begin{proposition}
\label{coefficient}
The $\alpha$-colored Eulerian polynomial of order $n$ is given by $h_n(t)=\sum_{i=0}^{n-1}\gamma_i t^i$ for
$$\gamma_i=\sum_{j=n-i}^n S(n,n-j)(n-j)!\alpha^{n-j-1}\binom{j}{n-i}(-1)^{j-n+i}.$$
\end{proposition}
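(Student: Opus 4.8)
The plan is to compute the $h$-polynomial of $P_n^\alpha$ directly from Definition~\ref{h_poly} using the $f$-polynomial identity established in Proposition~\ref{f_alpha}, and then extract the coefficient of $t^i$. First I would write down the $f$-polynomial of the usual permutohedron explicitly: since the $(n-k)$-dimensional faces of $P_n$ are counted by ordered set partitions of $[n]$ into $n-k+1$ blocks, equivalently $f_j(P_n) = S(n,j+1)(j+1)!$ for the $j$-dimensional faces, we get $f(P_n;t) = \sum_{j} S(n,j+1)(j+1)!\, t^{j}$, or after reindexing by the number of blocks $m = j+1$, $f(P_n;t) = \sum_{m=1}^n S(n,m)\, m!\, t^{m-1}$. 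By Proposition~\ref{f_alpha}, $f(P_n^\alpha;t) = f(P_n;\alpha t) = \sum_{m=1}^n S(n,m)\, m!\, \alpha^{m-1} t^{m-1}$.

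Next I would substitute this into $h(t) = (1-t)^{n-1} f\!\left(t/(1-t)\right)$, being careful about the dimension: $P_n^\alpha$ is $(n-1)$-dimensional (each facet is a product of permutohedra whose dimensions sum to $n - (\text{number of parts}) \le n-1$), so the exponent $n-1$ in Definition~\ref{h_poly} applied to a complex of dimension $n-1$ is what we use. Then
\[
h_n(t) = (1-t)^{n-1}\sum_{m=1}^n S(n,m)\, m!\, \alpha^{m-1}\left(\frac{t}{1-t}\right)^{m-1} = \sum_{m=1}^n S(n,m)\, m!\, \alpha^{m-1}\, t^{m-1}(1-t)^{n-m}.
\]
Now I would expand $(1-t)^{n-m} = \sum_{\ell=0}^{n-m}\binom{n-m}{\ell}(-1)^\ell t^\ell$, so the coefficient of $t^i$ in $h_n(t)$ is
\[
\gamma_i = \sum_{m=1}^n S(n,m)\, m!\, \alpha^{m-1}\binom{n-m}{\,i-(m-1)\,}(-1)^{i-(m-1)},
\]
where the binomial is zero unless $0 \le i-m+1 \le n-m$, i.e. $m-1 \le i$ and $m \le n-i+ (m-1)$... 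I would then perform the substitution $j = n-m$ (so $m = n-j$, $m-1 = n-j-1$, $\alpha^{m-1} = \alpha^{n-j-1}$, $S(n,m)m! = S(n,n-j)(n-j)!$) which turns $\binom{n-m}{i-m+1} = \binom{j}{\,i-n+j+1\,}$; and here I would use the symmetry $\binom{j}{i-n+j+1} = \binom{j}{j-(i-n+j+1)} = \binom{j}{n-i-1}$. This does not quite match the stated $\binom{j}{n-i}$, which tells me the indexing in the proposition is shifted — presumably the claimed polynomial is $h_n(t)=\sum_{i=0}^{n-1}\gamma_i t^i$ after a reindexing, or the stated $f$-polynomial convention counts blocks rather than dimension. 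I would reconcile this by tracking whether "rank $k$ in $Q_n^\alpha$" corresponds to $t^k$ or $t^{n-1-k}$ in the $f$-polynomial; matching the exponent conventions carefully will force the binomial to come out as $\binom{j}{n-i}$ with the sign $(-1)^{j-n+i}$ and the summation range $j$ from $n-i$ to $n$.

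The main obstacle is purely bookkeeping: getting the three competing indexings consistent — the rank in $Q_n^\alpha$, the dimension of the face in $P_n^\alpha$, and the exponent of $t$ in the $f$- and $h$-polynomials — together with the off-by-one in the exponent $n-1$ versus $n$ in Definition~\ref{h_poly} (the complex has dimension $n-1$, not $n$). Once the correct correspondence "dimension $n-k$ face $\leftrightarrow$ coefficient of $t^{?}$" is pinned down so that the substitution $t \mapsto t/(1-t)$ and the prefactor $(1-t)^{n-1}$ act on the right monomials, the binomial expansion of $(1-t)^{\text{power}}$ and the reindexing $j = n-m$ (or $j = n - \text{(number of parts)}$) give the stated formula immediately, with the binomial symmetry $\binom{a}{b}=\binom{a}{a-b}$ accounting for the final form $\binom{j}{n-i}$. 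No deep idea is needed beyond Propositions~\ref{f_alpha} and the combinatorial interpretation of $S(n,m)m!$; the proof is a careful unwinding of definitions.
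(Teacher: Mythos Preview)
Your approach is essentially the paper's: both compute the $h$-polynomial by writing the $f$-polynomial of $P_n^\alpha$ explicitly as $\sum_j S(n,n-j)(n-j)!\,\alpha^{n-j-1}x^j$ and then applying the $f$-to-$h$ transform and extracting coefficients. The only cosmetic difference is that the paper invokes the alternate form from Remark~\ref{remark_h_poly} (substitute $x=t-1$ and reverse coefficients), whereas you use Definition~\ref{h_poly} directly via $(1-t)^{n-1}f(t/(1-t))$; these are equivalent, and your diagnosis that the residual off-by-one in $\binom{j}{n-i}$ versus $\binom{j}{n-i-1}$ comes down to which convention is used for the degree/reversal is exactly right.
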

\begin{proof}
We have that $f(P_n^\alpha)=\sum_{i=1}^n S(n,n-i)(n-i)!\alpha^{n-i-1}x^i$, since the face lattice of $P_n^\alpha$ is the dual of $Q_n^\alpha$. Now compute $h(P_n^{\alpha})$ by evaluating the latter polynomial at $x=t-1$ and reversing the coefficients to obtain the desired result (see Remark~\ref{remark_h_poly}).
\end{proof}
Notice that when $\alpha=1$, the expression above for $\gamma_i$ reduces to the usual Eulerian number $A(n,i)$ using ~\cite[Theorem 1.18]{Bona}.

Table~\ref{table_polynomials} lists the two colored Eulerian polynomials $A_n^2(t)$ for $n$ from $2$ to $6$.
\begin{table}
\caption{\label{table_polynomials}The two colored Eulerian polynomials, $A_n^2(t)$.}
\begin{center}
 \begin{tabular}{c | c } 
$n$ & $A_n^2(t)$\\
\hline
2 & $3t+1$\\
\hline
3 & $13t^2+10t+1$\\
\hline
4 & $75t^3+91t^2+25t+1$\\
\hline
5& $541t^4+896t^3+426t^2+56t+1$\\
\hline
6& $4683t^5+9829t^4+6734t^3+1674t^2+119t+1$
\end{tabular}
\end{center}
\end{table}
\begin{proposition}
\label{closed_form_h}
When written as a sum over permutations, the $\alpha$-colored Eulerian polynomial $A_n^\alpha(t)$ satisfies
\begin{equation}
\label{closed_form}
A_n^\alpha(t):=h(P_n^\alpha)=\sum_{\pi\in\mathfrak{S}_n}(\alpha t)^{d(\pi)}(1+(\alpha-1)t)^{n-1-d(\pi)}.
\end{equation}
\end{proposition}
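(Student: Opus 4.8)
The plan is to compute $h(P_n^\alpha)$ directly from the explicit face numbers of $P_n^\alpha$ and then regroup the resulting sum along the fibers of the surjection $P\colon Q_n^\alpha\to\mathfrak{S}_n$ from the proof of Theorem~\ref{theorem_main}, so that this argument really does mimic that one. First I would record that a face of $P_n^\alpha$ of dimension $n-k$ is an $\alpha$-colored ordered set partition of $[n]$ into $k$ blocks, and that there are $S(n,k)\,k!$ choices of underlying ordered set partition and $\alpha^{k-1}$ colorings of its $k-1$ internal bars; hence $f(P_n^\alpha;x)=\sum_{k=1}^{n}S(n,k)\,k!\,\alpha^{k-1}x^{n-k}$, a polynomial of degree $n-1$ with leading coefficient $1$. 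Applying the $h$-transform in the form used in Proposition~\ref{coefficient} (evaluate at $x=t-1$ and reverse the coefficients, equivalently $h(t)=t^{n-1}f\bigl((1-t)/t\bigr)$; cf.\ Remark~\ref{remark_h_poly}) then gives
\[
A_n^\alpha(t)=h(P_n^\alpha;t)=\sum_{k=1}^{n}S(n,k)\,k!\,\alpha^{k-1}\,t^{k-1}(1-t)^{n-k}.
\]

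Next I would refine the surjection $P$ by the number of blocks. For a fixed $\pi\in\mathfrak{S}_n$, an element of $P^{-1}(\pi)$ with exactly $k$ blocks is obtained by putting a bar at each of the $d(\pi)$ descent positions of $\pi$, choosing $k-1-d(\pi)$ of the remaining $n-1-d(\pi)$ ascent positions to also carry a bar, and coloring each of the $k-1$ bars in one of $\alpha$ ways; hence there are exactly $\binom{n-1-d(\pi)}{k-1-d(\pi)}\alpha^{k-1}$ partitions with $k$ blocks in the fiber over $\pi$. Summing over $\pi$ and using that $P$ is onto gives the identity $S(n,k)\,k!\,\alpha^{k-1}=\sum_{\pi\in\mathfrak{S}_n}\binom{n-1-d(\pi)}{k-1-d(\pi)}\alpha^{k-1}$, which refines the total fiber count $(\alpha+1)^{n-1-d(\pi)}\alpha^{d(\pi)}$ appearing in Theorem~\ref{theorem_main}. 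Substituting this into the display above and exchanging the order of summation leaves, for each $\pi$ with $j=d(\pi)$ and after the substitution $m=k-1-j$, the inner sum
\begin{align*}
\sum_{m\ge 0}\binom{n-1-j}{m}\alpha^{m+j}t^{m+j}(1-t)^{n-1-j-m}
&=\alpha^{j}t^{j}(1-t)^{n-1-j}\Bigl(1+\tfrac{\alpha t}{1-t}\Bigr)^{n-1-j}\\
&=(\alpha t)^{j}\bigl(1+(\alpha-1)t\bigr)^{n-1-j},
\end{align*}
by the binomial theorem together with the identity $1+\tfrac{\alpha t}{1-t}=\bigl(1+(\alpha-1)t\bigr)/(1-t)$. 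This is exactly the summand of \eqref{closed_form}, so summing over $\pi$ finishes the proof.

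The only genuine obstacle I anticipate is pinning down the normalization of the $h$-polynomial so that the powers of $t$ and $1-t$ stay matched — in particular verifying that $f(P_n^\alpha;x)$ has degree $n-1$ so that the reversal used in Proposition~\ref{coefficient} is an honest degree-$(n-1)$ reversal, and that the resulting exponents of $t$ lie in $\{0,1,\dots,n-1\}$; everything past that point is the binomial theorem. As sanity checks one can compare $n=2,3$ against Table~\ref{table_polynomials} and note that $\alpha=1$ collapses the right-hand side to $\sum_{\pi}t^{d(\pi)}$. A shorter but less self-contained alternative is to combine $f(P_n^\alpha;x)=f(P_n;\alpha x)$ (Proposition~\ref{f_alpha}) with the expansion $f(P_n;u)=\sum_{\pi\in\mathfrak{S}_n}u^{d(\pi)}(1+u)^{n-1-d(\pi)}$ of the permutohedron's $f$-polynomial and then apply the $h$-transform; this relies on the identification of the $h$-vector of $P_n$ with the Eulerian numbers in the matching normalization.
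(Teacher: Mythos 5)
Your proposal is correct, but your main argument takes a genuinely different route from the paper's. The paper's proof is essentially the short alternative you mention only at the end: it invokes Proposition~\ref{f_alpha} to write $h(P_n^\alpha;t)$ as the $h$-transform of $f(P_n;\alpha t)$, performs the substitution $\alpha t/(1-t)=\beta/(1-\beta)$ with $\beta=\alpha t/(1+(\alpha-1)t)$, and then leans on the classical identification of the $h$-polynomial of the permutohedron with the Eulerian polynomial, so that $A_n^\alpha(t)=(1+(\alpha-1)t)^{n-1}A_n\bigl(\alpha t/(1+(\alpha-1)t)\bigr)$ expands into the stated sum over permutations. Your primary proof instead computes $h(P_n^\alpha)$ directly from the Stirling-number $f$-vector, obtaining $\sum_{k}S(n,k)\,k!\,\alpha^{k-1}t^{k-1}(1-t)^{n-k}$, and then regroups this along the fibers of $P\colon Q_n^\alpha\to\mathfrak{S}_n$ refined by block number via $S(n,k)\,k!=\sum_{\pi\in\mathfrak{S}_n}\binom{n-1-d(\pi)}{k-1-d(\pi)}$, finishing with the binomial theorem; each of these steps is valid, and the $n=2,3$ cases agree with Table~\ref{table_polynomials}. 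What your route buys is self-containedness: it never assumes the classical fact that $h(P_n)=A_n$ (it reproves it as the case $\alpha=1$), and it confronts the normalization question head-on --- your choice of the transform used in Proposition~\ref{coefficient} (evaluate at $t-1$ and reverse, i.e.\ $h(t)=t^{n-1}f\bigl((1-t)/t\bigr)$) is indeed the one consistent with the claimed identity and with Table~\ref{table_polynomials}, whereas the paper's proof applies Definition~\ref{h_poly} verbatim and glosses over the reversal issue raised in Remark~\ref{remark_h_poly}. What the paper's route buys is brevity and the conceptual point that, at the level of $f$-polynomials, $P_n^\alpha$ is just $P_n$ with the variable scaled by $\alpha$, so $A_n^\alpha$ is a rational reparametrization of $A_n$ --- the same observation that later drives the real-rootedness argument.
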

\begin{proof}
By Proposition ~\ref{f_alpha}, $f(P_n^{\alpha}; t) = f(P_n; \alpha\cdot t)$. So
\begin{align*}
A_n^\alpha(t)=h(P_n^\alpha,t)&=f(P_n^\alpha,t/(1-t))\cdot(1-t)^{n-1}\\
&=f(P_n,\alpha t/(1-t))\cdot(1-t)^{n-1}.
\end{align*}
Now use $\alpha\cdot t/(1-t)=\beta/(1-\beta)$ where $\beta=\alpha\cdot t/(1+(\alpha-1)t)$. Lastly apply the $f$ to $h$ transform of Definition~\ref{h_poly} once more to obtain the desired result.
\end{proof}
By inspection, we notice that the $\alpha$-colored Eulerian polynomials appear to be unimodal. We show that, in fact, they are real rooted.
\begin{proposition}
The $\alpha$-colored Eulerian polynomial $A_n^\alpha(x)$ has all real roots.
\end{proposition}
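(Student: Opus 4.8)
The plan is to deduce real-rootedness of $A_n^\alpha(x)$ from the classical real-rootedness of the ordinary Eulerian polynomial, using the closed form of Proposition~\ref{closed_form_h} to recognize $A_n^\alpha$ as a real projective change of variable applied to $A_n$. The one external ingredient I would cite is that $A_n(x)=\sum_{\pi\in\mathfrak{S}_n}x^{1+d(\pi)}$ has only real roots (see, e.g., \cite[Theorem~1.18]{Bona} or \cite[Ch.~1]{EC1}). Since $A_n(x)=x\,B_n(x)$, where $B_n(x):=\sum_{\pi\in\mathfrak{S}_n}x^{d(\pi)}$ has degree $n-1$, nonnegative coefficients, and $B_n(0)=1$, the polynomial $B_n$ has exactly $n-1$ roots, all of them real and strictly negative; write $B_n(x)=\prod_{i=1}^{n-1}(x+r_i)$ with each $r_i>0$.

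Next I would rewrite \eqref{closed_form}. Factoring $(1+(\alpha-1)t)^{n-1}$ out of the sum --- legitimate because $d(\pi)\le n-1$ for every $\pi\in\mathfrak{S}_n$ --- gives the polynomial identity
$$A_n^\alpha(t)=(1+(\alpha-1)t)^{n-1}\,B_n\!\left(\frac{\alpha t}{1+(\alpha-1)t}\right).$$
Substituting $B_n(x)=\prod_{i=1}^{n-1}(x+r_i)$ and clearing the denominator in each of the $n-1$ resulting linear factors, the right-hand side becomes
$$A_n^\alpha(t)=\prod_{i=1}^{n-1}\bigl((\alpha+(\alpha-1)r_i)\,t+r_i\bigr).$$
This is an equality of two polynomials in $t$ that agree for all but finitely many values of $t$, so the temporary division by $1+(\alpha-1)t$ is harmless. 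Because $\alpha\ge 1$ and every $r_i>0$, each coefficient $\alpha+(\alpha-1)r_i$ is at least $1$, hence nonzero; therefore every factor is genuinely linear, $A_n^\alpha$ has degree $n-1$, and its $n-1$ roots are the real (indeed negative) numbers $-r_i/(\alpha+(\alpha-1)r_i)$. This establishes the proposition.

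I do not expect a real obstacle here: the argument is essentially the observation that a real M\"obius substitution (here $x\mapsto \alpha t/(1+(\alpha-1)t)$, whose associated matrix has determinant $\alpha\neq 0$) carries the real line to itself and hence sends a real-rooted polynomial to a real-rooted one after homogenization. The only two points needing a line of care are (i) justifying that the factored formula above really is a polynomial identity despite the intermediate appearance of $1+(\alpha-1)t$ in a denominator, which the ``agreement at infinitely many points'' remark handles, and (ii) confirming that the substitution does not collapse the degree, which is exactly the inequality $\alpha+(\alpha-1)r_i\ge 1>0$ coming from $\alpha\ge 1$ and $r_i>0$. (The same computation in fact shows $A_n^\alpha(x)$ is real-rooted for every real $\alpha$, with only a possible drop in degree when some factor $\alpha+(\alpha-1)r_i$ vanishes.) An alternative, more self-contained route would be to set up a two-term recurrence for $A_n^\alpha$ and run the standard interlacing induction used for Eulerian-type polynomials, but the substitution argument is the shortest path given that Proposition~\ref{closed_form_h} is already in hand.
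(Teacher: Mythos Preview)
Your argument is correct and follows essentially the same approach as the paper: both use the closed form of Proposition~\ref{closed_form_h} to write $A_n^\alpha(t)=(1+(\alpha-1)t)^{n-1}\,B_n\!\bigl(\alpha t/(1+(\alpha-1)t)\bigr)$ and then read off real-rootedness from that of the ordinary Eulerian polynomial. Your version is in fact a bit more careful than the paper's sketch, since you explicitly factor to verify that no degree collapse occurs and that the resulting roots are real (indeed negative).
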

\begin{proof}
When $\alpha=1$ the result is clear. For $\alpha>1$, 
by Proposition~\ref{closed_form_h} we have that 
\begin{equation}
\label{usual_euler}
A_n^{\alpha}(t)=(1+(\alpha-1)t)^{n-1}\cdot A_n(\alpha t/ (1+(\alpha-1)t).
\end{equation}Using the real-rootedness of the Eulerian polynomials, Equation~\eqref{usual_euler} gives us that the possible roots of $A_n^{\alpha}(t)$ are $t=-1/(\alpha-1)$ or \\$t=\beta/(\alpha-\beta(\alpha-1))$, for $\beta$ a (real) root of the usual Eulerian polynomial~$A_n(t)$.
\end{proof}
We record a property, the proof of which is immediately obtained by letting $t=1$ in Equation~\eqref{closed_form}.
\begin{proposition}
\label{sum}
The sum of the coefficients of $A_n^\alpha(t)$ is given by $\alpha^{n-1}n!.$
\end{proposition}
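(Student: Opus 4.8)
The plan is to use the closed form for $A_n^\alpha(t)$ established in Proposition~\ref{closed_form_h}, namely
$$A_n^\alpha(t)=\sum_{\pi\in\mathfrak{S}_n}(\alpha t)^{d(\pi)}\bigl(1+(\alpha-1)t\bigr)^{n-1-d(\pi)}.$$
Since the sum of the coefficients of a polynomial $p(t)$ is just $p(1)$, I would simply substitute $t=1$ into this identity. This converts each summand into $\alpha^{d(\pi)}\cdot\bigl(1+(\alpha-1)\bigr)^{n-1-d(\pi)}=\alpha^{d(\pi)}\cdot\alpha^{n-1-d(\pi)}=\alpha^{n-1}$, and the exponent of $\alpha$ collapses to $n-1$ independently of the permutation $\pi$.

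The key observation is that the two powers of $\alpha$ telescope: $d(\pi)+(n-1-d(\pi))=n-1$, so the contribution of each permutation is the constant $\alpha^{n-1}$. Summing over all $\pi\in\mathfrak{S}_n$, of which there are $n!$, gives $\alpha^{n-1}\cdot n!$, which is exactly the claimed value. One should note that the factor $(1+(\alpha-1)t)$ evaluated at $t=1$ becoming $\alpha$ is what makes the cancellation clean, and this is a genuine simplification rather than a formal identity that holds only at a pole, since $1+(\alpha-1)=\alpha$ is well-defined for every $\alpha$.

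Alternatively, one could verify the claim directly from the combinatorial description of $P_n^\alpha$ via the $h$-polynomial transform: the sum of the $h$-vector entries of an $(n-1)$-dimensional polytopal complex equals the number of its top-dimensional faces, and $P_n^\alpha$ has facets of the form $P_{|c_1|}\times\cdots\times P_{|c_k|}$ labeled by $\alpha$-colored ordered set partitions with no adjacent blocks of the same color; however, this route requires summing $\sum_{\vec{c}}(\alpha-1)^{|\vec{c}|-1}\binom{n}{\vec{c}}$, which is a less transparent computation than the substitution $t=1$. So I would stick with the substitution approach.

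There is essentially no obstacle here: the statement is an immediate corollary of Proposition~\ref{closed_form_h}, and the only thing to check is the arithmetic of the exponents, which is trivial. The proof fits in a single line.
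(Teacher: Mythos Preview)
Your argument is correct and is exactly the paper's own proof: the paper states that the result ``is immediately obtained by letting $t=1$ in Equation~\eqref{closed_form},'' which is precisely the substitution you carry out. Your additional remark about the alternative facet-counting route is extraneous but harmless.
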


 Proposition~\ref{sum} indicates that the $\alpha$-colored Eulerian polynomials may be defined combinatorially, in particular, as ``descents" over $\alpha$-colored permutations with a fixed last color. Section~\ref{section_colored_permutations} explores this avenue, and in particular, develops the correct notion of descent.
\section{combinatorial description for $A_n^{\alpha}$}
\label{section_colored_permutations}
In section~\ref{section_polynomial} we developed the $\alpha$-colored Eulerian polynomials as the $h$-polynomials of the $\alpha$-colored permutohedron. As the usual Eulerian polynomials are defined in terms of descents in the symmetric group, we now develop the analogous combinatorial description for the $\alpha$-colored Eulerian polynomials.

\begin{definition}[$\alpha$-colored permutation]
\label{colored_permutation}
\rm{An $\alpha$-colored permutation is a list ~$w_1^{c_1}w_2^{c_2}\dots w_n^{c_n}$ with $w_1w_2\dots w_n\in\mathfrak{S}_n$, and the tuple $(c_1,\dots,c_n)$ a list of colors where each $c_i$ is one of 
$\alpha$ colors. We denote the collection of $\alpha$-colored permutations as $\mathfrak{S}_n^\alpha$. Furthermore,
we denote the collection of $\alpha$-colored permutations with a fixed last color $\beta$ as $\mathfrak{S}_{n,\beta}^\alpha$.
}
\end{definition}
We now define descents for $\alpha$-colored permutations.
\begin{definition}
\label{def_alpha_descent}
Let $\tau$ be an $\alpha$-colored permutation. The \emph{descents} of $\tau$ are indices where
$w$ has a descent or where colors change. That is, $\Des(\tau)$ is the subset of $[n-1]$ given by 
$$\Des{(\tau)} = \{ i : w_i > w_{i+1} \,{\rm or}\,  c_i \neq c_{i+1} \}.$$ 
\end{definition}
\begin{example}
\rm{
Let $\alpha=2$ and $n=3$. Let $\tau=2^11^23^1$. Then we have the permutation $w=213$ and color vector $c=(1,2,1)$.
The descent set of $\tau$ is given by $\Des(\tau)=\{1,2\}$. We have a descent in position one since we have a descent in the permutation $w$ in position one and we have a change in color from color $1$ to color $2$. We have a descent in the second position because in moving from position two to position three we change from color $2$ to color $1$. Note that we do not double count descents, namely, even though in position $1$ we have a descent for $w$ and a color change for $c$, the index $1$ is only included once in $\Des(\tau)$.
}
\end{example}
\begin{theorem}
\label{permutation_interpretation}
The $\alpha$-colored Eulerian polynomial $A_n^{\alpha}$ can be defined over the descents of $\alpha$-colored permutations with a fixed last color $\beta$. In particular,
$$A_n^{\alpha}(x)=\sum_{\tau\in\mathfrak{S}_{n,\beta}^\alpha}x^{d(\tau)}$$
\end{theorem}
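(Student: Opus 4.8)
The plan is to compare the combinatorial generating function $\sum_{\tau\in\mathfrak{S}_{n,\beta}^\alpha}x^{d(\tau)}$ directly with the closed form for $A_n^\alpha(x)$ established in Proposition~\ref{closed_form_h}, namely $A_n^\alpha(x)=\sum_{\pi\in\mathfrak{S}_n}(\alpha x)^{d(\pi)}(1+(\alpha-1)x)^{n-1-d(\pi)}$. To do this I would stratify the colored permutations in $\mathfrak{S}_{n,\beta}^\alpha$ by their underlying uncolored permutation $w\in\mathfrak{S}_n$. For a fixed $w$, the fiber consists of all ways to assign colors $c_1,\dots,c_n$ with $c_n=\beta$, and the statistic $d(\tau)=|\Des(\tau)|$ counts indices $i\in[n-1]$ where $w_i>w_{i+1}$ \emph{or} $c_i\neq c_{i+1}$. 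The key observation is that the positions $i\in D(w)$ (the ordinary descents of $w$) contribute a descent to $\tau$ no matter what the colors are, while at the non-descent positions $i\notin D(w)$ a descent of $\tau$ occurs exactly when $c_i\neq c_{i+1}$.

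First I would set up the per-fiber computation. Write $D(w)$ with $|D(w)|=d(w)=k$; these $k$ positions are ``forced'' descents. The remaining $n-1-k$ positions are ``optional'': position $i$ is a descent of $\tau$ iff the color changes there. Reading the color word $c_1 c_2 \cdots c_n$ from right to left, starting from the fixed value $c_n=\beta$, I would argue that choosing the colors is equivalent to choosing, independently at each of the $n-1$ gaps, whether the color ``changes'' or ``stays,'' with the caveat that a change at gap $i$ can be realized in $\alpha-1$ ways (pick any of the other $\alpha-1$ colors) and a stay in exactly one way — and since $c_n$ is pinned, these $n-1$ binary-plus-multiplicity choices determine the whole word bijectively. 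Thus, summing over the fiber of $w$,
\begin{align*}
\sum_{\substack{c:\,c_n=\beta}} x^{d(\tau)}
&=\Bigl(\prod_{i\in D(w)}\bigl(\text{change: }(\alpha-1)x\ \text{or stay: }1\bigr)\Bigr)\cdot\Bigl(\prod_{i\notin D(w)}\bigl(\text{change: }(\alpha-1)x\ \text{or stay: }1\bigr)\Bigr),
\end{align*}
except that at a forced-descent gap the factor $x$ is present regardless: a ``stay'' there still contributes $x$ (weight $x$, one color) and a ``change'' contributes $(\alpha-1)x$ (weight $x$, $\alpha-1$ colors), giving local factor $x+(\alpha-1)x=\alpha x$; at an optional gap a ``stay'' contributes $1$ and a ``change'' contributes $(\alpha-1)x$, giving local factor $1+(\alpha-1)x$. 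Hence the fiber sum is $(\alpha x)^{k}(1+(\alpha-1)x)^{n-1-k}$.

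Summing over all $w\in\mathfrak{S}_n$ then yields exactly $\sum_{w}(\alpha x)^{d(w)}(1+(\alpha-1)x)^{n-1-d(w)}$, which equals $A_n^\alpha(x)$ by Proposition~\ref{closed_form_h}; note in particular that the right-hand side is independent of the choice of fixed last color $\beta$, consistent with the statement. The main obstacle — really the only place requiring care — is the bijection in the middle step: I must verify precisely that fixing $c_n=\beta$ and then specifying at each of the $n-1$ gaps an element of a set of size $1$ (``stay'') or size $\alpha-1$ (``change to a specific other color'') recovers every color word with last letter $\beta$ exactly once. This is a straightforward induction from the right end of the word, but it is the step where the fixed-last-color hypothesis is actually used, and it is what makes the local weights come out as $\alpha x$ and $1+(\alpha-1)x$ rather than something $\beta$-dependent. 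Everything after that is bookkeeping, and the $\alpha=1$ case recovers the classical identity $A_n(x)=\sum_{\pi}x^{d(\pi)}$ as a sanity check.
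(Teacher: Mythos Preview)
Your proposal is correct and follows essentially the same approach as the paper: both stratify $\mathfrak{S}_{n,\beta}^\alpha$ by the underlying uncolored permutation $w$, determine the color word right-to-left from the pinned $c_n=\beta$, and separate the $n-1$ gaps into forced descents (positions in $D(w)$, local weight $\alpha x$) and optional ones (local weight $1+(\alpha-1)x$). The only cosmetic difference is that the paper first expands $(\alpha x)^{d(w)}(1+(\alpha-1)x)^{n-1-d(w)}$ by the binomial theorem and matches the coefficient of $x^m$ against a direct count of colored permutations with exactly $m$ descents, whereas you compute the fiber generating function as a product over gaps and sum---your packaging is slightly cleaner but the content is identical.
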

\begin{proof}
Proposition~\ref{closed_form} and the binomial theorem show the coefficient of $t^m$ in $A_n^{\alpha}(t)$ to be 
\begin{equation}
\label{coeff_t}
\sum_{\substack{\pi\in\mathfrak{S}_n\\ 0\leq d(\pi)\leq m}} \alpha^{d(\pi)}(\alpha-1)^{m-d(\pi)}\binom{n-1-d(\pi)}{m-d(\pi)}.
\end{equation}
We now show that equation~\eqref{coeff_t} also counts the number of $\alpha$-colored permutations with $m$ descents with a fixed last color $\beta$. 

An $\alpha$-colored permutation $\tau=w_1^{c_1}w_2^{c_2}\dots w_n^{c_n}$ has $m$ descents precisely when the permutation $w_1w_2\dots w_n\in\mathfrak{S}_n$ satisfies $0\leq d(w)\leq m$ and the color vector $c$ changes $m-d(w)$ times for indices not in the descent set of $w$. 

Fix a permutation $w\in\mathfrak{S}_n$ with $0\leq d(w)\leq m$. We now enumerate all $\tau\in\mathfrak{S}_{n,\beta}^\alpha$ so that $\tau=w_1^{c_1}w_2^{c_2}\dots w_n^{c_n}$ has $m$ descents.

Of the $n-1-d(w)$ positions where $w$ does not have a descent, choose $m-d(w)$ positions where our color vector $c$ will change, of which there are $\binom{n-1-d(w)}{m-d(w)}$ ways to do so. This will determine $m-d(w)+1$ runs in $c$ of the same color. The rightmost color run of $c$ will be the color of the last block, since this is a fixed color. Moving left from the last run, we have $(\alpha-1)$ color choices for each run, as each run must be a different color than the run that succeeds it to guarantee a descent is introduced. Therefore, we have $(\alpha-1)^{m-d(w)}$ choices for our color runs. Furthermore, since descents in $\mathfrak{S}_{n,\beta}^\alpha$ are not double counted, we may also change the color $c_i$ for each index $i$ where the permutation $w$ has a descent, giving us $\alpha^{d(w)}$ choices. Finally, summing over $w\in\mathfrak{S}_n$ with $0\leq d(w)\leq m$ gives the desired result.
\end{proof}
\begin{proposition}
\label{recurrence}
Let $A^\alpha(n,k)$ be the coefficient of $x^k$ in $A_n^{\alpha}(x)$, which we call the $\alpha$-colored Eulerian numbers. A recurrence relation for the $\alpha$-colored Eulerian numbers is given by:
\begin{align*}
&A^\alpha(n,k)=\\
&(k+1)A^\alpha(n-1,k)+\\
&[\alpha+n-k-1+(k-1)(\alpha-1)+(\alpha-1)(\alpha-1)!]A^\alpha(n-1,k-1)+\\
&(\alpha-1)(n-k)A^\alpha(n-1,k-2).
\end{align*}
\end{proposition}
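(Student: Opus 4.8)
The plan is to derive the recurrence combinatorially using the descent interpretation established in Theorem~\ref{permutation_interpretation}, namely $A^\alpha(n,k) = \#\{\tau \in \mathfrak{S}_{n,\beta}^\alpha : d(\tau) = k\}$ for a fixed last color $\beta$. The natural strategy is insertion: given an $\alpha$-colored permutation $\tau' = w_1^{c_1}\cdots w_{n-1}^{c_{n-1}} \in \mathfrak{S}_{n-1,\beta}^\alpha$, build $\tau \in \mathfrak{S}_{n,\beta}^\alpha$ by inserting the new largest letter $n$ with some color into one of the $n$ available gaps (including the two ends), and track how the descent count changes. Since the value $n$ is larger than everything, a descent of the underlying permutation is created exactly when $n$ is not placed at the very end; and the color-change contribution depends on whether the color of $n$ matches its left and right neighbors. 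I would organize the count by how much $d(\tau)$ exceeds $d(\tau')$: the coefficient of $A^\alpha(n-1,k)$ collects insertions that do not increase the descent number, the coefficient of $A^\alpha(n-1,k-1)$ collects those that increase it by one, and the coefficient of $A^\alpha(n-1,k-2)$ collects those that increase it by two.

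First I would handle the $A^\alpha(n-1,k-2)$ term, which should be the cleanest: to gain two descents, inserting $n$ must create a new permutation-descent (so $n$ is not at the end) \emph{and} simultaneously split an existing non-descent color-run of $\tau'$ in a way that adds a color change on the right side of $n$ while $n$'s own color differs from its left neighbor. Inserting into the interior of a maximal ascending, monochromatic run of $\tau'$ and giving $n$ a color different from that run contributes here; the number of such interior slots is governed by the number of ascents of $w'$ that are also color-non-changes, which is $n-1-d(\tau') = n-1-k$ (when $d(\tau')=k$), wait — more carefully, it is $(n-2)-d(\tau')$ interior gaps plus endpoint considerations. I expect the bookkeeping to collapse to the stated $(\alpha-1)(n-k)$ after accounting for the $(\alpha-1)$ color choices for $n$ and summing correctly over the relevant parameter; matching the $(n-k)$ factor (as opposed to $(n-1-k)$ or similar) will require care about the two endpoint gaps. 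Next I would handle the $A^\alpha(n-1,k)$ term: to keep the descent count fixed, $n$ must go at the far right and receive color $\beta$ (the forced last color), contributing $1$; or $n$ is inserted elsewhere but in a way that the new permutation-descent is ``absorbed'' — i.e. $n$ is placed immediately after a position that was already a descent of $\tau'$, or at a color-change, so no new descent index appears. Counting these absorbing slots should give the remaining $k$, yielding the coefficient $k+1$.

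The bracketed coefficient of $A^\alpha(n-1,k-1)$ is where the real work lies, and I expect this to be the main obstacle. This term must collect \emph{all} ways of gaining exactly one descent, and the stated expression $\alpha + (n-k-1) + (k-1)(\alpha-1) + (\alpha-1)(\alpha-1)!$ is a sum of four conceptually distinct contributions that I would need to identify with four disjoint families of insertions: (i) inserting $n$ at an end or absorbing position but with a color creating one new color-change ($\alpha$ or $n-k-1$ type terms — these track color choices at forced/free positions), (ii) interior insertions that create a permutation-descent but whose color matches a neighbor so only one net descent is added ($(k-1)(\alpha-1)$ — the $k-1$ suggesting positions tied to existing descents minus an endpoint correction), and (iii) a more subtle contribution of shape $(\alpha-1)(\alpha-1)!$, which is unusual and suggests it does \emph{not} come from a simple insertion but rather from a correction term — possibly recoloring phenomena at the fixed-last-color boundary, or an inclusion–exclusion artifact where insertions that would be double-counted across families must be re-added. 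The appearance of the factorial $(\alpha-1)!$ is the genuinely surprising feature; I suspect it arises from a sub-enumeration where, after fixing certain data, the remaining color choices on a short run are constrained to be a permutation of $\alpha-1$ available colors. Honestly, rather than force the purely combinatorial route through this term, the cleaner path for the writeup may be to establish the recurrence \emph{algebraically} from the closed form in Proposition~\ref{closed_form_h}: substitute $A_n^\alpha(t) = \sum_{\pi \in \mathfrak{S}_n} (\alpha t)^{d(\pi)}(1+(\alpha-1)t)^{n-1-d(\pi)}$, use the standard symmetric-group recurrence for the distribution of descents (each $\pi \in \mathfrak{S}_n$ arises from a unique $\pi' \in \mathfrak{S}_{n-1}$ with $d(\pi) \in \{d(\pi'), d(\pi')+1\}$, with multiplicities $d(\pi')+1$ and $n-1-d(\pi')$ respectively), and then extract the coefficient of $x^k$ on both sides, comparing against the binomial expansion in Proposition~\ref{coefficient}. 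This reduces everything to a finite polynomial identity in $t$ and $\alpha$ that can be verified by matching coefficients, and it sidesteps the need to combinatorially explain the $(\alpha-1)!$ term — though one would still want a remark pointing out why that factorial appears.
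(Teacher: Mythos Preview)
Your approach --- inserting a colored $n$ into elements of $\mathfrak{S}_{n-1,\beta}^\alpha$ and sorting according to whether the descent count rises by $0$, $1$, or $2$ --- is exactly what the paper indicates. The paper does not carry out the casework either; it only records the sketch and remarks that ``care is needed'' because the inserted $n$ carries a color and can create two descents at once. So on the level of strategy you are aligned with the paper.

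Your suspicion about the summand $(\alpha-1)(\alpha-1)!$ is well founded, and this is the real issue: no insertion argument (and no algebraic manipulation of Proposition~\ref{closed_form_h}) will ever produce a factorial in $\alpha$, because inserting a single colored letter never involves permuting a set of colors. In fact the recurrence as printed is false. From Proposition~\ref{closed_form_h} one gets $A_2^3(t)=1+5t$ and $A_3^3(t)=1+16t+37t^2$; at $n=3$, $k=2$, $\alpha=3$ the stated formula gives $3\cdot 0 + 9\cdot 5 + 2\cdot 1\cdot 1 = 47 \neq 37$. Everything works --- both your insertion bookkeeping and your proposed algebraic derivation --- if the last summand in the bracket is $(\alpha-1)$ rather than $(\alpha-1)(\alpha-1)!$, so that the bracket simplifies to $(k+1)\alpha + n - 2k - 1$; one checks this against $A_3^3$ and $A_4^3$ without difficulty, and at $\alpha=1$ it collapses to the classical $n-k$. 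Note that $(\alpha-1)$ and $(\alpha-1)(\alpha-1)!$ coincide for $\alpha\in\{1,2\}$, which is why Table~\ref{table_polynomials} is consistent with the misstated version. So the obstacle you flagged is a defect in the statement, not in your method; there is nothing to repair in your plan beyond correcting that term.
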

Using Theorem~\ref{permutation_interpretation}, we now describe how to prove the recursion. We create a colored permutation in $\mathfrak{S}_{n,\beta}^\alpha$ with $k$ descents by inserting an $n$ into the appropriate permutations in $\mathfrak{S}_{n-1,\beta}^\alpha$. Care is needed is developing the recursion, as the $n$ we insert can have one of $\alpha$ colors. Additionally, the insertion of $n$ can produce two descents, hence we have a three term recursion. 

Notice that $\alpha=1$ recovers the recursion for the usual Eulerian numbers, see Theorem $1.7$ of \cite{Bona}.
\section{Acknowledgments}
The author thanks his advisor, Dr.\ Richard Ehrenborg, for assisting with this exploration. The author also thanks Dr.\ Kyle Petersen of Depaul University for his great text on Eulerian numbers~\cite{Petersen}, as well as for invaluable help via email, specifically with the combinatorial description of $A_n^{\alpha}(x)$ given in Theorem~\ref{permutation_interpretation}.
\bibliographystyle{plain}
\bibliography{alpha_colored}

\end{document}